\newcommand{\Debug}{0}
\theoremstyle{plain}
\newtheorem{thm}{Theorem}[section]
\newtheorem{lem}[thm]{Lemma}
\newtheorem{cor}[thm]{Corollary}
\newtheorem{prop}[thm]{Proposition}
\newtheorem{rem}[thm]{Remark}
\newtheorem{ques}[thm]{{Question}}
\theoremstyle{definition}
\newtheorem{defi}[thm]{Definition}
\newcommand{\Cg}{Cayley graph}
\newcommand{\R}{\ensuremath{\mathbb{R}}}
\newcommand{\Z}{\ensuremath{\mathbb{Z}}}
\newcommand{\BS}{\ensuremath{\mathbb S}}
\newcommand{\sm}{\ensuremath{\setminus}}
\newcommand{\sub}{\subseteq}
\newcommand{\comment}[1]{}
\newcommand{\nat}{{\mathbb N}}
\newcommand{\real}{{\mathbb R}}
\newcommand{\DF}{\ensuremath{\mathcal D}}
\newcommand{\FF}{\ensuremath{\mathcal F}}
\newcommand{\GF}{\ensuremath{\mathcal G}}
\newcommand{\IF}{\ensuremath{\mathcal I}}
\newcommand{\PF}{\ensuremath{\mathcal P}}
\newcommand{\RF}{\ensuremath{\mathcal R}}
\newcommand{\SF}{\ensuremath{\mathcal S}}
\newcommand{\VF}{\ensuremath{\mathcal V}}
\newcommand{\WF}{\ensuremath{\mathcal W}}
\newcommand{\splpr}{special \plpr}
\newcommand{\Splpr}{Special \plpr}
\newcommand{\gplpr}{general \plpr}
\newcommand{\utr}{up to reflection}
	\def\?#1{\vadjust{\vbox to 0pt{\vss\vskip-8pt\leftline{%
     \llap{\hbox{\vbox{\pretolerance=-1
     \doublehyphendemerits=0\finalhyphendemerits=0
     \hsize16truemm\tolerance=10000\small
     \lineskip=0pt\lineskiplimit=0pt
     \rightskip=0pt plus16truemm\baselineskip8pt\noindent
     \hskip0pt        
     #1\endgraf}\hskip7truemm}}}\vss}}}
\else \newcommand{\?}[1]{} \fi
\begin{document}

\title{The planar Cayley graphs are effectively enumerable I: consistently planar graphs}
\author{Agelos Georgakopoulos\thanks{Supported by EPSRC grant EP/L002787/1, and by the European Research Council (ERC) under the European Union’s Horizon 2020 research and innovation programme (grant agreement No 639046). The first author would like to thank the Isaac Newton Institute for Mathematical Sciences, Cambridge, for support and hospitality during the programme `Random Geometry' where work on this paper was undertaken.}
\medskip 
\\
  {Mathematics Institute}\\
 {University of Warwick}\\
  {CV4 7AL, UK}\\
\and
 Matthias Hamann\thanks{Both authors have been supported by FWF grant P-19115-N18.} 
  \medskip \\
{Alfr\'ed R\'enyi Institute of Mathematics}\\
{Hungarian Academy of Sciences}\\
{Budapest, Hungary}\\
\\
}
\date{\today}
\maketitle

\newcommand{\labtequ}[2]{ \begin{equation} \label{#1} 	\begin{minipage}[c]{0.9\textwidth} \centering #2 \end{minipage} \ignorespacesafterend \end{equation} } 
\newcommand{\showFig}[2]{
   \begin{figure}[htbp]
   \centering
   \noindent
\includegraphics{#1.eps}
   \caption{\small #2}
   \label{#1}
   \end{figure}
}
\newcommand{\fig}[1]{Figure~{\ref{#1}}}

\newcommand{\cR}{\ensuremath{\mathcal{R}}}
\newcommand{\srev}{spin-reversing}
\newcommand{\plpr}{planar presentation}
\newcommand{\empr}{embedded presentation}

\newcommand{\Plpr}{Planar presentation}
\newcommand{\spre}{spin-preserving}
\newcommand{\Gam}{\ensuremath{\Gamma}}
\newcommand{\sig}{\ensuremath{\sigma}}
\newcommand{\g}{\ensuremath{G\ }}
\newcommand{\G}{\ensuremath{G}}
\renewcommand{\iff}{if and only if}
\newcommand{\fe}{for every}
\newcommand{\Fe}{For every}
\newcommand{\fea}{for each}
\newcommand{\Fea}{For each}
\newcommand{\st}{such that}
\newcommand{\sot}{so that}
\newcommand{\ti}{there is}
\newcommand{\ta}{there are}

\newcommand{\cns}{consistent}
\newcommand{\cnsem}{consistent embedding}
\newcommand{\vapf}{accumulation-free}
\newcommand{\dray}{double ray}
\newcommand{\cls}[1]{\overline{#1}}


\newcommand{\Lr}[1]{Lemma~\ref{#1}}
\newcommand{\Lrs}[1]{Lemmas~\ref{#1}}
\newcommand{\Tr}[1]{Theorem~\ref{#1}}
\newcommand{\Trs}[1]{Theorems~\ref{#1}}
\newcommand{\Sr}[1]{Section~\ref{#1}}
\newcommand{\Srs}[1]{Sections~\ref{#1}}
\newcommand{\Prr}[1]{Pro\-position~\ref{#1}}
\newcommand{\Prb}[1]{Problem~\ref{#1}}
\newcommand{\Cr}[1]{Corollary~\ref{#1}}
\newcommand{\Cnr}[1]{Con\-jecture~\ref{#1}}
\newcommand{\Or}[1]{Observation~\ref{#1}}
\newcommand{\Er}[1]{Example~\ref{#1}}
\newcommand{\Dr}[1]{De\-fi\-nition~\ref{#1}}

\newcommand{\kreis}[1]{\mathaccent"7017\relax #1}

\newcommand{\ttt}{\ensuremath{\mathbb T}}
\newcommand{\sydi}{\triangle}
\newcommand{\Gc}{\ensuremath{\overline{G}}}
\newcommand{\cells}{\ensuremath{G^2}}

\begin{abstract}
We obtain an effective enumeration of the family of finitely generated groups admitting a faithful, properly discontinuous action on some 2-manifold contained in the sphere. This is achieved by introducing a type of group presentation capturing exactly these groups.

Extending this in a companion paper, we 
find group presentations capturing the planar finitely generated  \Cg s.
Thus we obtain an effective enumeration of these Cayley graphs, yielding in particular an affirmative answer to a question of Droms et al. 

\end{abstract}

\section{Introduction}

\subsection{Overview} \label{sec overv}
Groups which act discretely on the real plane $\R^2$ by homeomorphisms, called \emph{planar discontinuous groups}, are a classical topic the study of which goes back at least as far as Poincar\'e \cite{PoiThe}, and are now fully classified and considered to be well understood. 
The finite ones were classified by
Maschke \cite{Maschke}, and important contributions to the infinite case where made by Wilkie \cite{wilNon} and Macbeath \cite{macCla}. 
See \cite[Prop.~III.~5.~4]{LyndonSchupp} or \cite{TucFin,ZVC} for more. These groups are closely related to surface groups, which have influenced most of combinatorial group theory \cite{AckFiRoSur}.

Planar discontinuous groups coincide with the groups admitting a planar modified\footnote{`modified' in the sense that redundant 2-cells are removed; see \cite{LyndonSchupp} for details.} Cayley complex, and they are Fuchsian when they do not contain orientation-reversing elements \cite{AckFiRoSur}. These Cayley complexes correspond exactly to the \Cg s that can be embedded into $\R^2$ without accumulation points of vertices.

Planar \Cg s that can only be embedded in $\R^2$ with accumulation points of vertices, and their groups, are a bit harder to understand, and still a topic of ongoing research  \cite{ArzChe,DroInf,DrSeSeCon,dunPla,cayley3,cay2con,mohTre}. An important fact is that they are finitely presented and hence accessible \cite{DroInf,dunPla,Dun}.
Dunwoody \cite[Theorem~3.8]{dunPla} uses this to prove that such a group, or a subgroup of index two of its, is a fundamental group of a graph of groups in which each vertex group is either a planar discontinuous group or a free product of finitely many cyclic groups and all edge groups are finite cyclic groups (possibly trivial).


\medskip
In this paper, along with the companion paper \cite{planarPresII}\footnote{An older combined version of these two papers is available at  {\small http://arxiv.org/abs/1506.03361v2}.}
we extend the aforementioned correspondence between planar Cayley complexes and accumulation-free planar \Cg s by relaxing the notion of planarity of a Cayley complex in such a way that it corresponds exactly to its \Cg\ being planar. Our new notion of `\emph{almost planarity}' of a Cayley complex can be recognised algorithmically (as discussed in \cite{planarPresII}). In view of the Adjan-Rabin theorem \cite{Adjan,Rabin}, this is a rather rare case of a decidable, geometric property of Cayley complexes. 

The key to this is a concept of \emph{planar group presentation} we introduce; this is a type of group presentation that guarantees the planarity of the corresponding \Cg, and conversely, we show that every planar, finitely generated \Cg\ admits such a group presentation. 

In this paper we concentrate on the subfamily of such \Cg s $G$ that admit an embedding in the sphere $\BS^2$ such that the action of the corresponding group on $G$ preserves face-boundaries. Such an embedding is called {\em consistent}. We show in a follow-up paper in preparation that the groups having such \Cg s are exactly those finitely generated groups admitting a faithful, properly discontinuous action by homeomorphisms on a 2-manifold contained in  $\BS^2$. We will define a type of group presentation characterising these groups, obtaining in particular an effective enumeration of this family of groups. 


\subsection{Results}\label{subsec_results}
The \emph{Cayley complex} $X$ corresponding to a group presentation $\PF=\left< \SF \mid \mathcal \RF \right>$ is the 2-complex obtained from the \Cg\ $G$ of $\PF$ by glueing a 2-cell along each closed walk of $G$ induced by a relator $R\in \RF$. We say that $X$ is \emph{almost planar}, if it admits a map $\rho: X \to \R^2$ \st\ the 2-simplices of $X$ are \emph{nested} in the following sense. We say that  two 2-simplices of $X$ are nested, if the images of their interiors are either disjoint, or one is contained in the other, or their intersection is the image of a 2-cell bounded by two parallel edges corresponding to an involution $s\in \SF$.\footnote{The third option can be dropped by considering the \emph{modified} Cayley complex in the sense of \cite{LyndonSchupp}, i.e.\ by representing involutions in $\SF$ by single, undirected edges.} We call the presentation $\PF$ a \emph{\plpr} if its Cayley complex is almost planar. Our first result is

\begin{prop} \label{thmPlpr}
Every planar, finitely generated \Cg\ admits a \plpr.
\end{prop}

The main idea behind this is that if two relators in a presentation induce cycles whose interiors overlap but are not nested, then we could replace a subword of one relator by a subword of the other to produce an equivalent presentation with less overlapping; our proof that a presentation with no such overlaps exists is based on the machinery of Dunwoody cuts, cf.~\cite{dicks_dunw}.

In fact, we prove something much stronger than \Prr{thmPlpr}. We introduce a specific type of \plpr, called a \emph{\gplpr}, and show that every planar, finitely generated \Cg\ admits such a
presentation and, conversely, every \gplpr\ has a planar \Cg\ (Theorem 4.7.\ in the companion paper \cite{planarPresII}). This converse is the hardest result of our work. Moreover, as \gplpr s can be recognised algorithmically, we deduce 

\begin{cor}[\cite{planarPresII}] \label{thmEffEn}
The set of planar, finitely generated \Cg s can be effectively enumerated.
\end{cor}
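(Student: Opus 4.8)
The plan is to derive Corollary~\ref{thmEffEn} by combining the two halves of the characterisation established by \Tr{Gplanar} with the algorithmic recognisability of \gplpr s. The strategy rests on the logical equivalence: a finitely generated group admits a planar \Cg\ \iff\ it admits a \gplpr. This biconditional is exactly what \Tr{Gplanar} supplies --- the forward direction says every planar, finitely generated \Cg\ arises from some \gplpr, and the (harder) converse says every \gplpr\ has a planar \Cg. Given this, enumerating planar \Cg s reduces to enumerating \gplpr s and reading off their \Cg s.

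Concretely, I would first observe that group presentations over a finite generating set form a recursively enumerable collection: one can list all finite tuples $(\SF, \RF)$ where $\SF$ is a finite symbol set and $\RF$ a finite set of finite words over $\SF \cup \SF\inv$. This gives an effective enumeration $\PF_1, \PF_2, \dots$ of all finite presentations. The second step is to invoke the recognisability of \gplpr s: I would appeal to the stated fact (the algorithm underlying \Tr{thm_decidS}, which decides almost planarity, together with the syntactic conditions defining a \gplpr) that there is an algorithm which, given a finite presentation $\PF_i$, halts and correctly decides whether $\PF_i$ is a \gplpr. Running this decision procedure on each $\PF_i$ in turn, I would filter the master list down to the subsequence of those presentations that \emph{are} \gplpr s.

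The third step converts presentations into \Cg s. For each \gplpr\ $\PF_i$ surviving the filter, its \Cg\ is computable from the presentation data: the vertex set is the group, and although the group itself need not be given by a finite object, the \Cg\ can be output as an explicit (possibly infinite but computably presented) labelled graph, since multiplication in the group is governed by the relators. By \Tr{Gplanar} this \Cg\ is planar, so every graph we output genuinely belongs to the target set; conversely, again by \Tr{Gplanar}, every planar, finitely generated \Cg\ is the \Cg\ of some \gplpr, hence of some $\PF_i$ passing the filter, so nothing in the target set is missed. This establishes that the enumeration is both sound and complete.

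The main obstacle is not in this final assembly --- which is essentially a diagonal-free dovetailing argument --- but lies entirely upstream, in securing the decidability of the \gplpr\ property. That decidability is precisely the content flagged as exceptional in the introduction (``a rather rare case of a decidable, geometric property of Cayley complexes'', in view of Adjan--Rabin), and it depends on \Tr{thm_decidS} together with showing that the combinatorial/geometric conditions defining a \gplpr\ can be checked by a terminating algorithm. Once that recognisability is in hand and the equivalence of \Tr{Gplanar} is granted, the enumeration follows formally; I would therefore present the corollary as a short deduction, taking care only to state explicitly that \emph{effective enumeration} here means producing a list in which each planar finitely generated \Cg\ appears (up to isomorphism) and no non-planar graph does.
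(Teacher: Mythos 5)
Your overall architecture (enumerate finite presentations with spin data, filter by an algorithmically checkable property, output the survivors) matches the paper's, but there is a genuine gap in the completeness half. You attribute the biconditional ``planar \iff\ admits a \gplpr'' entirely to \Tr{Gplanar}; in fact \Tr{Gplanar} states only the direction ``\gplpr\ $\Rightarrow$ planar''. The converse is established by the paper only for $3$-connected graphs (\Tr{thm_3con}) and for $2$-connected graphs (\Tr{thm_Con2Back}), and in the latter case only after passing to a \insep\ Tietze-supergraph via \Lr{cg2con}. Planar \Cg s of connectivity $1$, and $2$-connected ones that are not \insep, are \emph{not} shown to admit \gplpr s themselves: \Lr{cg1con} and \Lr{cg2con} only show that each such graph has a planar Tietze-supergraph, obtained by adding redundant generators, which does admit a \gplpr. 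Consequently an enumeration that outputs only the \Cg s of \gplpr s would miss these graphs. The paper repairs this with a post-processing step your proposal lacks: for each \gplpr\ in the output it recursively detects ``obviously redundant'' generators (a generator occurring exactly once in exactly one relator) and also outputs the presentations obtained by deleting them, thereby recovering the connectivity-$1$ and non-\insep\ graphs; soundness of this extra step holds because deleting generators passes to a subgraph and so preserves planarity.

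Two secondary points. First, your claim that each surviving presentation's \Cg\ ``can be output as an explicit \ldots\ computably presented labelled graph, since multiplication in the group is governed by the relators'' is unjustified: the word problem is not in general decidable from a presentation, and the paper explicitly leaves the effective constructibility of balls in the \Cg\ of a \gplpr\ as an open conjecture in \Sr{secFR}. For the paper's notion of effective enumeration it suffices to output the presentations themselves, each of which determines a planar \Cg. Second, the decidability of the \gplpr\ property is argued directly from the four conditions of Definition~\ref{defgplpr}, not via \Tr{thm_decidS}, which concerns \splpr s and nested consistent maps rather than general planar presentations.
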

By an effective enumeration of an infinite set $\GF$ we mean a computer program that outputs elements of this set and nothing else, and every element of $\GF$ is in the output after some finite time (repetitions are allowed).

This implies a positive answer to a question of Droms et al.\ \cite{DroInf,DrSeSeCon},  namely whether the groups admitting a planar, finitely generated \Cg\ (called \emph{planar groups}) can be effectively enumerated. This question is motivated by the fact that, as a consequence of the Adjan-Rabin theorem \cite{Adjan,Rabin}, planar groups cannot be recognised by an algorithm (taking a presentation as input), and by the fact that planar discontinuous groups have been effectively enumerated~\cite{DrSeSeCon}. M.~Dunwoody (private communication) informs us that the fact that the planar groups  can be effectively enumerated should also follow from his result \cite[Theorem~3.8]{dunPla} mentioned above with a little bit of additional work (the main issue here is whether the `or a subgroup of index two' proviso can be dropped).

We remark that there is a huge variety of planar \Cg s: even the 3-regular ones form 37 infinite families \cite{cayley3,cay2con}. Moreover, as the same group can have many planar \Cg s, \Cr{thmEffEn} is stronger than saying that planar groups can be effectively enumerated. The aforementioned classification of the 3-regular planar \Cg s cost the first author about 100 pages of work \cite{cayley3,cay2con}. This task would have been significantly simplified if our results had been available at that time, using a computer aided search based on the algorithm behind \Cr{thmEffEn}, which is straightforward to implement.

Our proof method is essentially graph-theoretic, and does not appeal to the theory of planar discontinuous groups. We are optimistic that it can be employed in a wider setup including groups like the Baumslag-Solitar group that act on spaces that generalise the plane.

\comment{
\medskip
We remark in \Cr{corresf} that planar groups are residually finite. We therefore replace the question of Droms et al.\ with the following, which might be too bold
\begin{ques} \label{qurf}
Is every family of finitely presented, residually finite groups  which is closed under taking subgroups effectively enumerable?
\end{ques}
}

\subsection{\Plpr s}
We now sketch the fundamental concept of this paper, called a \splpr\ (\gplpr s are introduced in \cite{planarPresII}). Such presentations always exist for a 3-connected planar \Cg, or more generally, for a \Cg\ that can be embedded in the plane in such a way that its label-preserving automorphisms carry facial paths to facial paths.

We say that $\PF=\left< \SF \mid \mathcal \RF \right>$ is a \emph{\splpr}, if it can be endowed with a cyclic ordering $\sig$ ---from now on called a \emph{spin}--- of the symmetrization $\SF'=\{s,s^{-1} \mid s \in \SF\}$ of its generating set, with the following property.
Suppose $W_1=sUt$ and $W_2=s'Ut'$, where $s,s',t,t'\in \SF'$, are two words, each contained in some rotation of a relator in $\RF$  (possibly the same relator), where $U$ is any (possibly trivial) word with letters in $\SF'$. Then $\sig$ allows us to say whether paths induced by these words $W_1,W_2$ would \emph{cross} each other or not if we could embed the \Cg\ of $\PF$ in the plane in such a way that for every vertex the cyclic ordering of the labels of its incident edges we observe coincides with $\sig$. To make this more precise, we embed a tree consisting of a `middle' path $P$ with edges labelled by the letters in $U$, and two leaves attached at each endvertex of~$P$ labelled with $s,s',t,t'$ as in \fig{stu}, where the spin we use at each endvertex of~$P$ is the one induced by $\sig$ on the corresponding 3-element subset of $\SF'$. There are essentially two situations that can arise, both shown in that figure. Naturally, we say that $W_1,W_2$ \emph{cross} each other in the right-hand situation, and they do not in the left-hand one.

\showFig{stu}{The definition of \emph{crossing}; $W_1=sUt$ crosses $W_2=s'Ut'$ in the right, but not in the left.}

We then say that $\PF$ is a \splpr, if there is a spin $\sig$ on $\SF'$ \st\ no two words as above cross each other. Note that this is an abstract property of sets of words, and it is defined without reference to the \Cg\ of $\PF$; in fact, it can be checked algorithmically. The main essence of this paper is that this is enough to guarantee the planarity of the \Cg, and that a converse statement holds. 

This generalises an idea from \cite{vapf}, where it was shown that every planar discontinuous group admits a \splpr\ where every relator is \emph{facial}, i.e.\ it crosses no other word (where we consider words that are not necessarily among our relators).

Our actual definition of a \splpr, given in \Sr{secPlpr}, is in fact a bit more general than the above sketch. Consider for example the \Cg\ of the presentation $\left<a,b \mid a^n, b^2, aba^{-1}b\right>$. Its \Cg\ is a prism graph with an essentially unique embedding in $\R^2$. Note that the spin of half of its vertices is the reverse of the spin of other half. This is a general phenomenon: every 3-connected \Cg\ has an essentially unique embedding, and in that embedding all vertices have the same spin \emph{\utr}. However, for every generator $s$, either the two end-vertices of all edges labelled $s$ have the same spin, or they always have reverse spins. This yields a classification of generators into \emph{\spre} and \emph{\srev} ones, and our definition of a \splpr\ takes this into account; still, everything can be checked algorithmically.

The situation becomes much more complex if one wants to consider planar \Cg s that are not 3-connected. Such graphs do not always have an embedding with all vertices having the same spin \utr; an example is given in \cite{DrSeSeCon}. 
In order to capture such \Cg s we 
introduce \emph{\gplpr} in \cite{planarPresII}. 


\medskip
\Splpr s are important not only as a building block for \gplpr s, but also because of the following
\begin{thm} \label{thm cons}
A \Cg\ admits a consistent embedding \iff\ it admits a \splpr.
Thus the finitely generated \Cg s that admit a consistent embedding can be effectively enumerated.
\end{thm}
Here, we call an embedding $\rho:G \to \R^2$ \emph{consistent}, if every vertex has the same spin \utr, and every generator is either \spre\ or \srev\ in the above sense. Equivalently, $\rho$ is consistent, if the label-preserving automorphisms of $G$ carry every facial path with respect to $\rho$ to a facial path.

As mentioned in \Sr{sec overv}, the groups having \Cg s as in \Tr{thm cons} coincide with those finitely generated groups admitting a faithful, properly discontinuous action by homeomorphisms on a (topological) 2-manifold contained in  $\BS^2$, which is proved in a follow-up paper in preparation. (This 2-manifold can be assumed to be the sphere, the plane, the open cylinder, or the Cantor sphere, depending on whether the group has 0, 1, 2, or infinitely many ends, respectively.)
Thus \splpr s characterise these groups, yielding in particular an effective enumeration.

\medskip
This paper is structured as follows. After some general definitions, we introduce \splpr s in \Sr{3con}, and show that every 3-connected planar \Cg\ admits such a presentation (\Tr{thm_3con}). We extend this in \cite[Theorem~5.6]{planarPresII} by replacing the 3-connectedness condition with the weaker condition of admitting a consistent embedding. Next, we show that the \Cg\ of every \splpr\ is planar in \Sr{secplty} (\Tr{thmplanar}). Combining these two results yields the first sentence of \Tr{thm cons}. It is easy to see that \splpr s can be recognised algorithmically (see \cite{planarPresII}), which then implies the second sentence. The results of this paper are used in \cite{planarPresII} in order to obtain \Cr{thmEffEn} and other related results.


\section{Definitions}

\subsection{\Cg s and group presentations} \label{defpres}
We will follow the terminology of~\cite{diestelBook05} for graph-theoretical terms and that of~\cite{bogop} and~\cite{GroupsGraphsTrees} for group-theoretical ones. Let us recall the definitions most relevant for this paper.

A {\em group presentation} $\left< \SF \mid \mathcal \RF \right>$ consists of a set $\SF$ of distinct symbols, called the  {\em generators} and a set $\RF$ of words with letters in $\SF \cup \SF^{-1}$, where $\SF^{-1}$ is the set of symbols $\{s^{-1} \mid s \in \SF\}$, called  {\em relators}. Each such group presentation uniquely determines a group, namely the  quotient group $F_\SF/N$ of the (free) group $F_\SF$ of words with letters in $\SF \cup \SF^{-1}$ over the (normal) subgroup $N=N(\RF)$ generated by all conjugates of elements of $\RF$.

The \Cg\  $Cay(\PF)= Cay \left< \SF \mid \mathcal \RF \right>$  of  a group presentation $\PF= \left< \SF \mid \mathcal \RF \right>$
is an edge-coloured directed graph $\g= (V,E)$ constructed as follows. The vertex set of \g is the group $\Gam=F_\SF /N$ corresponding to $\PF$. The set of {\em colours} we will use is $\SF$.  For every $g\in \Gam, s\in \SF$ join $g$ to $gs$ by an edge coloured $s$ directed from $g$ to $gs$. Note that $\Gam$ acts on \g by multiplication on the left; more precisely, \fe\ $g\in \Gam$ the mapping from $V(G)$ to $V(G)$ defined by $x \mapsto gx$ is an {\em automorphism} of \G, that is, an automorphism of \g that preserves the colours and directions of the edges. In fact, \Gam\ is precisely the group of such automorphisms of \G. Any presentation of \Gam\ in which $\SF$ is the set of generators will also be called a presentation of $Cay(\PF)$.

Note that some elements of $\SF$ may represent the identity element of \Gam, and distinct elements  of $\SF$ may represent the same element of \Gam; therefore, $Cay(\PF)$ may contain loops and parallel edges of the same colour.

If $s\in \SF$ is an {\em involution}, i.e.\ $s^2=1$, then every vertex of \g is incident with a pair of parallel edges coloured $s$ (one in each direction). 
If $s^2$ is a relator in~$\RF$, we will follow the convention of replacing this pair of parallel edges by a single, undirected edge.
This convention is common in the literature \cite{LyndonSchupp}, and is convenient when  studying planar \Cg s.
\medskip

If \g is a \Cg, then we use $\Gam(G)$ to denote its group.

\medskip
If $R$ is any  (finite or infinite) word with letters in $\SF \cup \SF^{-1}$, and $g$ is a vertex of $G=Cay \left< \SF \mid \mathcal \RF \right>$, then starting from $g$ and following the edges corresponding to the letters in $R$ in order we obtain a walk $W$ in $G$. We then say that $W$ is {\em induced} by $R$ at $g$, and we will sometimes denote $W$ by $gR$; note that for a given $R$ \ta\ several walks in \g induced by $R$, one for each starting vertex $g\in V(G)$. 

Let $H_1(G)$ denote the first simplicial homology group of \g over $\Z$. 
We will use the following well-known fact which is easy to prove.
\begin{lem} \label{relcc}
Let $G= Cay \left< \SF \mid \RF \right>$ be a \Cg. Then the (closed) walks in \g induced by relators in $\RF$ generate $H_1(G)$. 
\end{lem}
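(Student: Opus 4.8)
The claim is that the closed walks induced by relators generate $H_1(G)$, where $G = Cay\langle \SF \mid \RF\rangle$. The plan is to pass through the Cayley complex $X$ and use the fact that it is simply connected, then relate $H_1(G)$ to the cycle space carved out by the attached $2$-cells.

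First I would recall the standard setup. The Cayley complex $X$ is obtained from $G$ by glueing a $2$-cell along every closed walk induced by a relator $R \in \RF$ at every vertex $g$; that is, along every translate $gR$. A foundational fact (essentially the statement that $\PF$ is a presentation of $\Gam$) is that $X$ is simply connected, hence $H_1(X) = 0$. The whole argument hinges on transporting this vanishing down to $G$.

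Next I would set up the cellular chain complex of $X$ and compare it with that of $G$. Since $X$ is obtained from $G$ by attaching $2$-cells, the $1$-skeleton of $X$ is $G$ itself, so $C_1(X) = C_1(G)$ and $C_0(X) = C_0(G)$, and the boundary map $\partial_1$ agrees. The only new feature in $X$ is the group $C_2(X)$ of $2$-chains together with the boundary map $\partial_2 \colon C_2(X) \to C_1(G)$. By definition of simplicial (or cellular) homology,
\begin{equation}
H_1(G) = \Ker \partial_1 \big/ \Image(\text{boundaries from triangulating } G).
\end{equation}
Working over $G$ alone, $H_1(G) = \Ker\partial_1 / \Image\partial_2^{G}$, where $\partial_2^G$ records the (trivial, for a graph) $2$-cells of $G$; since $G$ is one-dimensional this reduces to $H_1(G) = \Ker\partial_1 = Z_1(G)$, the cycle space of $G$ over $\Z$. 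So the content of the lemma is that the boundaries $\partial(gR)$ of the relator-induced walks generate the entire cycle space $Z_1(G)$. The clean way to see this is to observe that $H_1(X) = 0$ forces $\Ker\partial_1 = \Image\partial_2$; and $\Image\partial_2$ is exactly the subgroup of $Z_1(G)$ generated by the boundaries $\partial(gR)$ of the attached $2$-cells, since $C_2(X)$ is free on those cells and $\partial_2$ sends each generating cell to the cycle $gR$. This is the crux, so I would spell it out.

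Finally I would assemble the pieces. Because $X$ is simply connected we have $H_1(X)=0$, so every $1$-cycle of $G$ lies in $\Image\partial_2$, i.e.\ is a $\Z$-linear combination of the cycles $gR$ with $R \in \RF$, $g \in V(G)$. Each such $gR$ is precisely the closed walk induced by the relator $R$ at $g$, so these walks generate $Z_1(G) = H_1(G)$, as required. The main obstacle, and the only nontrivial input, is the simple connectivity of the Cayley complex: one must either invoke the standard fact that the Cayley complex of a presentation is simply connected, or, if a self-contained argument is wanted, verify directly that any cycle in $G$ can be reduced to a combination of relator cycles by iteratively filling in the attached $2$-cells. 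Everything else is bookkeeping in the cellular chain complex, and I would keep it brief since the lemma is labelled well-known.
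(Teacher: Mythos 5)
Your proof is correct: the paper states this lemma as a well-known fact and omits any proof, and your route --- simple connectivity of the Cayley complex $X$ forces $\mathrm{im}\,\partial_2=\ker\partial_1=Z_1(G)=H_1(G)$, and $\mathrm{im}\,\partial_2$ is generated by the boundaries of the $2$-cells, i.e.\ the translates $gR$ of the relator walks --- is exactly the standard argument the authors have in mind. The only input you rely on, that $X$ is simply connected, is the classical fact that the Cayley complex of a presentation is the universal cover of the presentation complex, so there is no gap.
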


\subsection{Graph-theoretical concepts}
Let $G=(V,E)$ be a connected graph fixed throughout this section. The set of neighbours of a vertex $x$ is denoted by {\em $N(x)$}.

Two paths in \g are {\em independent}, if they do not meet  at any vertex except perhaps at common endpoints. 
If $P$ is a path or cycle we will use $|P|$ to denote the number of vertices in $P$ and  $||P||$ to denote the number of edges of $P$. Let $xPy$ denote the subpath of $P$ between its vertices $x$ and $y$.
If $W$ is a walk, we denote by $\kreis{W}$ its subwalk consisting of~$W$ without its first and last edges.

A \emph{directed edge} of $G$ is an ordered pair $(x,y)$ such that $xy\in E(G)$. Thus any edge $xy=yx\in E$ corresponds to two directed edges. 


A {\em hinge} of \g is an edge $e=xy$ \st\ the removal of the pair of vertices $x,y$  disconnects \G. A hinge should not be confused with a {\em bridge}, which is an edge whose removal separates \g  although its endvertices are not removed.

\G\ is called {\em $k$-connected} if $G - X$ is connected for every set $X\subseteq V$ with $|X | < k$. Note that if \g is $k$-connected then it is also $(k-1)$-connected. The {\em connectivity $\kappa(G)$} of \g is the greatest integer $k$ such that \G\ is $k$-connected.

A $1$-way infinite path is called a {\em ray}. Two rays are {equivalent} if no finite set of vertices separates them eventually. The corresponding equivalence
classes of rays are the {\em ends} of $G$. A graph is {\em multi-ended} if it has more than one end. Note that given any two finitely generated presentations of the same group, the corresponding \Cg s have the same number of ends. Thus this number, which is known to be one of $0,1,2, \infty$, is an invariant of finitely generated groups.

A {\em \dray} is a directed $2$-way infinite path.

The set of all finite sums of (finite) cycles forms a vector space over $\mathbb F_2$, the \emph{cycle space} of~$G$.

\subsection{Embeddings in the plane} \label{secDem}	

An {\em embedding} of a graph \g will always mean a topological embedding of the corresponding 1-complex in the euclidean plane $\R^2$; in simpler words, an embedding is a drawing in the plane with no two edges crossing.

A {\em face} of an embedding $\rho\colon G \to \R^2$ is a component of $\R^2 \sm \rho(G)$. The {\em boundary} of a face $F$ is the set of vertices and edges of \g that are mapped by $\rho$ to the closure of $F$. The {\em size} of $F$ is the number of edges in its boundary. Note that if $F$ has finite size then its boundary is a cycle of \G.

A walk in \g is called {\em facial} with respect to~$\varphi$ if it is contained in the boundary of some face of~$\varphi$. 

Given an embedding $\varphi$ of a \Cg\ $G$ with generating set $\SF$, we consider \fe\ vertex $x$ of \g the embedding of the edges incident with $x$, and define the {\em spin} of $x$ to be the cyclic order of the set $L:=\{xy^{-1} \mid y\in N(x)\}$ in which $xy_1^{-1}$ is a successor of $xy_2^{-1}$ whenever the edge $xy_2$ comes immediately after the edge $xy_1$ as we move clockwise around~$x$. Note that the set $L$ is the same \fe\ vertex of $G$, and depends only on~$\SF$ and on our convention on whether to draw one or two edges per vertex for involutions. This allows us to compare spins of different vertices. Call an edge of \g {\em spin-preserving} if its two endvertices have the same spin in~$\varphi$, and call it {\em spin-reversing} if the spin of one of its endvertices is the reverse of the spin of its other endvertex. 

An embedding of a \Cg\ is called {\em \cns} if, intuitively, it embeds every vertex in a similar way in the sense that the group action carries faces to faces. Let us make this more precise.
Call a colour in $\SF$ {\em \cns} if all edges bearing that colour are  spin-preserving or all edges bearing that colour are spin-reversing in~$\varphi$. Finally, call the embedding $\varphi$ {\em \cns} if every colour is \cns\ in~$\varphi$. Note that if $\varphi$ is \cns, then there are only two types of spin in~$\varphi$, and they are the reverse of each other.



The following classical result was proved by Whitney \cite[Theorem~11]{whitney_congruent_1932} for finite graphs and by Imrich \cite{ImWhi} for infinite ones.

\begin{thm} \label{imrcb}
Let \g be a 3-connected graph embedded in the sphere. Then every automorphism of \g maps each facial path to a facial path.
\end{thm}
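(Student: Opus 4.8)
The plan is to recast ``being the boundary of a face of~\sig'' as a property of the abstract graph \g that does not mention the embedding \sig; such a property is automatically preserved by every automorphism, and since a facial path is by definition contained in a face boundary, the theorem follows immediately. The combinatorial description I have in mind is the classical one, essentially due to Tutte: in a $3$-connected planar graph the finite face boundaries are exactly the \emph{peripheral} cycles, where a cycle $C$ is peripheral if it is induced (has no chord) and $G-V(C)$ is connected (see \cite{diestelBook05}). I would first verify this for faces of finite size and then, following Imrich, extend the argument to faces of infinite size.

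For the finite case, note that \g is planar since it embeds in the sphere. That a finite face boundary $C$ is a cycle follows from $2$-connectedness. It is induced: a chord $e=xy$ cannot be drawn inside the face, so it lies in the other region $R$ of $S^2\sm\sig(C)$, and then $e$ together with the two $x$--$y$ arcs of $C$ witnesses $\{x,y\}$ as a $2$-separator of \G, contradicting $3$-connectedness. It is non-separating by a similar argument: a splitting of $G-V(C)$ into two nonempty parts, both drawn in $R$, again forces a separator of order at most $2$. Conversely, let $C$ be peripheral. Since no edge of \g can cross $\sig(C)$, each component of $G-V(C)$ lies in a single region of $S^2\sm\sig(C)$; as $G-V(C)$ is connected it lies entirely in one of the two regions, say $F_1$. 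The other region $F_2$ then contains no vertex, and no edge either, because an edge in $F_2$ would be a chord of $C$. Hence $F_2$ is a face bounded by $C$, and the characterisation is established.

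The conclusion for finite faces is now immediate. Peripherality is preserved by any automorphism $\phi$ of \G: clearly $\phi(C)$ is again an induced cycle, and $\phi$ maps $G-V(C)$ isomorphically onto $G-V(\phi(C))$, which is therefore connected. Thus $\phi$ carries peripheral cycles to peripheral cycles, i.e.\ finite face boundaries to finite face boundaries. If $P$ is a facial path lying on a finite face boundary $C$, then $\phi(P)\sub\phi(C)$ and $\phi(C)$ is a face boundary, so $\phi(P)$ is facial.

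The genuinely hard part, and the content of Imrich's extension, is the treatment of faces of infinite size. Here a face boundary need not be a cycle---it can be a double ray or a still more complicated subgraph---so the peripheral-cycle characterisation fails verbatim and the $2$-separator arguments above have no direct analogue. The plan is instead to argue through the uniqueness, up to a homeomorphism of the sphere, of the embedding of a $3$-connected planar graph: the automorphism $\phi$ transports \sig\ to a second embedding $\sig\circ\phi$ with the same image $\sig(G)$, and one shows that $\sig$ and $\sig\circ\phi$ induce the very same partition of the sphere into faces, so that $\phi$ permutes the faces of~\sig. Making ``the same partition into faces'' precise for infinite graphs, where the naive uniqueness statement can fail and faces may accumulate, is the delicate step; once it is secured, $\phi$ maps every face boundary, and hence every facial path contained in one, to a facial path.
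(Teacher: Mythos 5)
First, a point of reference: the paper does not prove this statement at all --- it is quoted as a classical result, with the finite case attributed to Whitney and the infinite case to Imrich. So the only meaningful comparison is between your attempt and what a complete proof would require. Your treatment of faces of finite size is essentially Whitney's original route and is fine in outline: characterising finite face boundaries as the induced non-separating (``peripheral'') cycles turns facialness into an abstract graph property, which is then trivially automorphism-invariant. (The non-separating half of your verification is stated a little too quickly --- showing that two components of $G-V(C)$ drawn in the same region of $S^2\sm\sig(C)$ force a $\le 2$-separator takes a short additional argument about how the attachment sets of the two components interleave on $C$ --- but this is standard and recoverable.)

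The genuine gap is the infinite case, which you explicitly leave as a plan rather than a proof, and which is precisely the case this paper needs: \Tr{imrcb} is invoked to derive \Lr{lprem} for infinite planar \Cg s, whose faces may well have infinite size (boundaries that are double rays rather than cycles). Your proposed route for that case --- ``argue through the uniqueness, up to a homeomorphism of the sphere, of the embedding'' --- does not work as stated. For finite $3$-connected planar graphs, uniqueness of the embedding \emph{is} Whitney's theorem and is essentially equivalent to the statement being proved, so invoking it is close to circular; for infinite graphs the naive uniqueness statement is false (embeddings can differ in their accumulation behaviour, and faces need not correspond bijectively between two embeddings with the same image), which is exactly why Imrich's paper exists. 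A correct completion would instead extend the combinatorial characterisation of face boundaries to the infinite setting (e.g.\ showing that in a locally finite $3$-connected plane graph every face boundary is a cycle or a double ray that is induced and non-separating, and conversely), or reproduce Imrich's argument; as it stands, the half of the theorem that the paper actually uses is unproved in your proposal.
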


This implies in particular that if $\varphi$ is an embedding of the 3-connected \Cg\ \G, then the cyclic ordering of the colours of the edges around any vertex of \g is the same up to orientation. In other words, at most two spins are allowed in~$\varphi$. Moreover, if two vertices  $x,y$ of \g that are adjacent by an edge, bearing a colour $b$ say, have distinct spins, then any two vertices $x',y'$ adjacent by a $b$-edge also have distinct spins. We just proved
\begin{lem} \label{lprem}
 Let \g be a 3-connected planar \Cg. Then every embedding of \g is \cns.
\end{lem}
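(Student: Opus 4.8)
The plan is to deduce \Lr{lprem} directly from \Tr{imrcb} together with the structural remarks already assembled in the preceding paragraph. Since \G\ is a $3$-connected planar graph, by a theorem of Whitney its embedding in the sphere (equivalently, its embedding in $\R^2$, which we may regard as $S^2$ minus a point in a face) is essentially unique up to reflection, and \Tr{imrcb} tells us that every automorphism of \G\ carries facial paths to facial paths. First I would fix an embedding \sig\ of \G\ and recall that the rotation system it induces --- that is, the cyclic orderings of the colours of edges around vertices --- is determined, at each vertex, by its spin. The task is to show that \sig\ is \cns, i.e.\ that every colour $b\in S$ is either spin-preserving at all its edges or spin-reversing at all its edges.

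Next I would exploit the group action. For any vertex $x$ and any group element $g\in \Gam(G)$, left-multiplication by $g$ is a colour- and direction-preserving automorphism of \G, hence in particular an automorphism in the sense of \Tr{imrcb}. By that theorem it maps facial paths to facial paths, so it maps the local rotation at $x$ either to the rotation at $gx$ or to its reverse; in the language of the paper, the spin of $x$ is carried to the spin of $gx$ \utr. Since $\Gam(G)$ acts transitively on $V(G)$, this already shows that all vertices have the same spin \utr, so only two spins occur in \sig\ and they are reverses of one another --- exactly the two observations recorded just before the lemma.

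I would then establish the colour-consistency clause, which is the crux. Fix a colour $b$ and suppose some $b$-edge $xy$ is spin-reversing, so $x$ and $y$ have opposite spins. Given any other $b$-edge $x'y'$, transitivity of the action supplies $g\in\Gam(G)$ with $gx=x'$; because $g$ is a colour-preserving automorphism it must send the $b$-edge at $x$ to the $b$-edge at $x'$, so $gy=y'$. As $g$ acts on all spins by the \emph{same} operation --- either preserving all of them or reversing all of them \utr, by the previous paragraph --- it preserves the relation ``$x$ and $y$ have opposite spins''. Hence $x'$ and $y'$ also have opposite spins, i.e.\ $x'y'$ is spin-reversing as well. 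The same argument run with ``same spin'' in place of ``opposite spin'' shows that if one $b$-edge is spin-preserving then all are. Thus every colour is \cns\ and \sig\ is a consistent embedding.

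The main obstacle is the colour-consistency step, and specifically justifying that a single automorphism $g$ acts uniformly (preserving, or uniformly reversing, \utr) on the spins of \emph{both} endpoints of a given $b$-edge simultaneously. This is precisely what the ``at most two spins, mutually reverse'' structure from \Tr{imrcb} guarantees: whether $g$ preserves or reverses spins is a global feature of $g$ (determined, say, by whether it preserves or reverses the chosen orientation of the sphere), not something that can vary from vertex to vertex. Once this global dichotomy is in hand, the preservation of the ``same/opposite spin'' relation along $b$-edges is immediate, and the lemma follows. I would keep this argument short, since everything substantive has been imported from Whitney's and Imrich's theorem and the two remarks stated immediately above the lemma.
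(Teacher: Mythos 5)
Your proposal is correct and follows essentially the same route as the paper: the paper also deduces the lemma directly from \Tr{imrcb} via the transitive, colour-preserving action of $\Gam(G)$, first concluding that at most two (mutually reverse) spins occur and then that any automorphism carrying one $b$-edge to another preserves the ``same/opposite spin'' relation of its endvertices. Your extra justification of why a single automorphism acts uniformly on spins is exactly the point the paper leaves implicit.
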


\Cg s of connectivity 2 do not always admit a \cnsem~\cite{DrSeSeCon}. However, in the cubic case they do; see \cite{cay2con}.

An embedding is {\em Vertex-Accumulation-Point-free}, or {\em \vapf} for short, if the images of the vertices have no accumulation point in $\R^2$.

A {\em crossing} of a path $X$ by a path or walk $Y$ in a plane graph is a subwalk $Q=e\kreis{Q}f$ of $Y$ where the end-edges $e,f$ of $Q$ are incident with $X$ on opposite sides of $X$ (but not contained in $X$) and (the image of) $Q$ is contained in $X$ (\fig{XYQ}). Note that if $Q$ is a crossing of $X$ by $Y$, then $X$ contains a crossing $Q'=g\kreis{Q}h$ of $Y$ by $X$, which we will call the \emph{dual crossing} of $Q$.

\showFig{XYQ}{A crossing of~$X$ by~$Y$.}

For a closed walk $W$ and $n\in\nat$, let $W^n$ be the $n$-times concatenation of~$W$.
Two closed walks $R$ and $W$ \emph{cross} if there are $i,j\in\nat$ such that $R^i$ contains a crossing of a subwalk of~$W^j$.
They are \emph{nested} if they do not cross.

\subsection{Fundamental groups of planar graphs}

Let $G$ be a graph.
The \emph{sum} of two walks $W_1,W_2$ where $W_1$ ends at the starting vertex of~$W_2$ is their concatenation.
Let $W=x_1x_2\ldots x_n$ be a walk.
Its \emph{inverse} is $x_n\ldots x_1$.
If $x_{i-1}=x_{i+1}$ for some~$i$, we call the walk $W':=x_1\ldots x_{i-1}x_{i+2}\ldots x_n$ a \emph{reduction} of~$W$.
Conversely, we \emph{add} the \emph{spike} $x_{i-1}x_ix_{i+1}$ to~$W'$ to obtain~$W$.
If $W$ is a closed walk, we call $x_i\ldots x_nx_1\ldots x_{i-1}$ a \emph{rotation} of~$W$.

Let $\VF$ be a set of closed walks.
The smallest set $\overline{\VF}\supseteq \VF$ of closed walks that is invariant under taking sums, reductions and rotations and under adding spikes is the set of closed walks \emph{generated by $\VF$}.
We also say that any $V\in\overline{\VF}$ is \emph{generated by~$\VF$}.
A closed walk is \emph{indecomposable} if it is not generated by closed walks of strictly smaller length.
Note that no indecomposable closed walk $W$ has a \emph{shortcut}, i.\,e.\ a (possibly trivial) path between any two of its vertices that has smaller length than any subwalk of any rotation of~$W$ between them.
In particular, indecomposable closed walks induce cycles.

For any $\eta$ in~$\pi_1(G)$, the fundamental group of~$G$, let $W_\eta\in\eta$ be the unique reduced closed walk in~$\eta$ and $W_\eta^\circ$ be its (unique) cyclical reduction.
For $\VF\sub\pi_1(G)$, set
\[
\VF^\circ:=\{W_\eta^\circ\mid\eta\in\VF\}.
\]
By $\WF(G)$ we denote the set of all closed walks in~$G$.

The following theorem is an immediate consequence of~\cite[Theorem 6.2]{planarCycles2}, which is a generalisation of the main theorem of~\cite{planarCycles}.

\begin{thm}\label{thm_fundgr}
Let $G$ be a planar locally finite $3$-connected graph and $\Gamma$ a group acting on~$G$.
Then $\pi_1(G)$ has a generating set $\VF$ such that $\VF^\circ$ is a $\Gamma$-invariant nested generating set for $\WF(G)$ that consists of indecomposable closed walks.
\end{thm}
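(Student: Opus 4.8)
The plan is to derive Theorem~\ref{thm_fundgr} from the cited result~\cite[Theorem 6.2]{planarCycles2}, whose statement almost certainly already produces, for a planar locally finite $3$-connected graph $G$ carrying an action of $\Gamma$, a $\Gamma$-invariant nested generating set of the cycle space consisting of the (finite) face boundaries, or more precisely of indecomposable cycles realised by faces of the essentially unique embedding. By \Lr{lprem} and \Tr{imrcb} such a graph embeds with an essentially unique embedding up to reflection, so the collection of finite facial cycles is canonical and hence automatically $\Gamma$-invariant; this is the geometric input I would lean on. The task is therefore to translate the ``cycle space over $\mathbb F_2$'' statement of the cited theorem into a statement about closed walks and the fundamental group $\pi_1(G)$, and to pin down the three asserted properties: $\Gamma$-invariance, nestedness, and indecomposability.

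First I would fix the essentially unique embedding $\sig$ of $G$ granted by \Lr{lprem}, and let $\FF$ be the set of finite face boundaries, each traversed as a closed walk; since each is a cycle of $G$, each such walk is reduced and cyclically reduced. I would then lift these boundary walks to elements of $\pi_1(G)$ by choosing, $\Gamma$-equivariantly, a spanning tree or simply by basing the walks appropriately, and let $\VF\sub\pi_1(G)$ be the resulting set so that $\VF^\circ=\FF$ up to rotation. The cited Theorem~6.2 supplies that $\FF$ (as cycles) generates the cycle space and is $\Gamma$-invariant and nested; I would invoke \Lr{relcc} together with the standard correspondence between the cycle space over $\mathbb F_2$, the first homology $H_1(G)$, and the set of closed walks $\WF(G)$ under the operations of sum, reduction, rotation and spike-addition, to upgrade ``generates the cycle space'' to ``generates $\WF(G)$'' in the sense defined just above the statement. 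Nestedness transfers verbatim, since crossing of closed walks was defined combinatorially in \Sr{secDem} and is invariant under the embedding being used.

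The remaining point is indecomposability of the members of $\VF^\circ$. Here I would argue that a finite face boundary $F$ of a planar embedding encloses no vertices or edges in its interior disc, so any closed walk generated from shorter walks and equal to $F$ would have to traverse $F$ itself; concretely, a face boundary induces a cycle with no shortcut, and by the observation recorded before the statement, a closed walk with no shortcut is indecomposable. Thus each $W\in\VF^\circ$ is indecomposable, as required, and I would conclude by checking that the three properties hold simultaneously for the single chosen set $\VF$.

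The main obstacle I anticipate is purely bookkeeping rather than conceptual: the cited theorem is phrased for the cycle space, whereas the statement here is phrased for $\pi_1(G)$ and $\WF(G)$, and the passage between ``cycles summing over $\mathbb F_2$'' and ``closed walks closed under rotations, reductions and spikes'' must be carried out carefully so that $\Gamma$-invariance is preserved by the chosen basepoints and so that indecomposability (a statement about walk length) is not lost when one replaces the $\mathbb F_2$-generation by walk-generation. Making the lift $\FF\mapsto\VF$ genuinely $\Gamma$-equivariant is the one place where a small amount of care is needed, but the essentially unique embedding makes the face set canonical, so no genuine choice is forced and $\Gamma$-invariance follows. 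I expect the whole argument to be short, essentially a citation of~\cite[Theorem 6.2]{planarCycles2} combined with \Lr{relcc} and the elementary remark that shortcut-free closed walks are indecomposable.
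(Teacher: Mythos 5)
The paper does not reprove this statement at all: it is taken verbatim as an immediate consequence of \cite[Theorem 6.2]{planarCycles2}, which is already phrased for $\pi_1(G)$ and for generation of $\WF(G)$ by nested indecomposable closed walks. Your proposal instead guesses a weaker form of the citation (a cycle-space statement about face boundaries) and tries to bridge the difference, and both bridging steps have genuine gaps. First, the generating set cannot be taken to be the set of finite face boundaries. A planar locally finite $3$-connected graph need not admit a \vapf\ embedding --- indeed the \Cg s this paper cares about are exactly those that can only be embedded with accumulation points of vertices --- and in that case face boundaries may be infinite (rays or double rays), and the finite facial cycles need not generate the cycle space. The case where the finite face boundaries do generate is the easy, classical (\vapf) case; the entire content of \cite{planarCycles} and its generalisation is to produce a nested $\Aut(G)$-invariant generating set of cycles that are in general \emph{not} facial. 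So the ``geometric input'' you lean on is unavailable precisely in the situation the theorem is needed for.

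Second, the passage from ``generates the cycle space over $\mathbb F_2$'' to ``generates $\WF(G)$'' is not bookkeeping. The group $\pi_1(G)$ is free, $H_1(G)$ is its abelianisation, and a set of elements generating the abelianisation need not generate the group; \Lr{relcc} concerns relators of a \Cg\ and in any case only yields homology, so it cannot supply this upgrade. This is exactly why the paper cites the $\pi_1$-version \cite[Theorem 6.2]{planarCycles2} rather than the cycle-space main theorem of \cite{planarCycles}: the strengthening from homology to the fundamental group (equivalently to generation of $\WF(G)$ under sums, rotations, reductions and spikes) is the substance of that generalisation, not a translation exercise. A smaller but real slip: the observation recorded before the statement is that indecomposable closed walks have no shortcut; you invoke its converse (shortcut-free implies indecomposable), which is not what is recorded and would itself need an argument. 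The honest proof of \Tr{thm_fundgr} is simply the citation; any self-contained argument would have to reprove the main results of \cite{planarCycles2}, not derive them from facial cycles.
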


\section{3-connected planar \Cg s admit \splpr s}\label{3con}


\subsection{\Plpr s --- the special case} \label{secPlpr}
We now give the crucial definition of our paper, that of a (special) \plpr. The intuition behind it comes from the notion of a \cnsem\ given above: a \plpr\ is a group presentation endowed with some additional data (forming what we will call an \empr) which, once we have proven planarity of the corresponding \Cg\ \G, will describe the local structure of a \cnsem\ of \G, that is, the spin and the information of which generators preserve or reflect it. 

\medskip
Given a group presentation $\PF=\left<\SF \mid \RF\right>$, 
we will distinguish between two types of generators $s$: those for which we have $s^2$ as a relator in $\RF$ and the rest. 
The reasons for this distinction will become clear later.
Generators $t$ for which the relation $t^2$ is provable but not explicitly part of the presentation might exist, but do not cause us any concerns. Given a group presentation $\PF=\left<\SF \mid \RF\right>$, we thus let $\IF=\IF(\PF)$ denote the set of elements $s\in \SF$ such that $\RF$ contains the relator $s^2$ or $s^{-2}$. 

Let $\SF'= \SF \cup (\SF \sm \IF)^{-1}$. For example, if $\PF=\left<a,b,c \mid a^2, b^2\right>$, then $\SF'= \{a,b,c,c^{-1}\}$.

A  {\em spin} on $\PF=\left<\SF \mid \RF\right>$ is a cyclic ordering of $\SF'$ (to be thought of as the cycling ordering of the edges that we expect to see around each vertex of our \Cg\ once we have proved that it is planar).

An {\em \empr} is a triple $(\PF, \sigma, \tau)$ where  $\PF=\left<\SF \mid \RF\right>$ is a group presentation, $\sigma$ is a spin on $\PF$, and $\tau$ is a function from $\SF$ to $\{0,1\}$ (encoding the information of whether each generator is \spre\ or \srev).

To every \empr\ $\PF, \sigma, \tau$ we can associate a tree $\ttt$ with an \vapf\ embedding in $\R^2$.
As a graph, we let $\ttt$ be $Cay\left<\SF \mid s^2, s \in \IF\right>$, and let $o=o_\ttt$ denote the identity of \ttt\ seen as a \Cg. Easily, we can embed $\ttt$ in $\R^2$ in such a way that for every vertex $v$ of $\ttt$, one of the two cyclic orderings of the colours of the edges of $v$ inherited by the embedding coincides with $\sigma$ and moreover, for every two adjacent vertices $v,w$ of $\ttt$, the clockwise cyclic ordering of the colours  of the edges of $v$ coincides with that of $w$ if and only if $\tau(a)=0$ where $a$ is the colour of the $v$--$w$ edge. (If $\tau(a)=1$, then the clockwise ordering of $v$ coincides with the anti-clockwise ordering of $w$.)

Given a word $W$, we let $W^\infty$ be the 2-way infinite word obtained by concatenating infinitely many copies of $W$.
We say that two words $W,Z\in \RF$ {\em cross}, if there is a 2-way infinite path $R$ of $\ttt$ induced by  $W^\infty$ and a 2-way infinite path $L$ induced by $Z^\infty$ such that $L$ meets both components of $\R^2 \sm R$.
Note that, if two non-trivial words form closed walks in the \Cg, then the words cross if and only if the closed walks cross.

For example, consider the presentation $\PF=\left<n,e,s,w \mid n^2, e^2, s^2, w^2\right>$, the spin $n,e,s,w$ (read `north, east, south, west'), and $\tau$ identically 0. Then any word containing $ns$ as a subword crosses any word containing $ew$. The word $nesw$ however crosses no other word, and indeed adding that word to the above presentation yields a planar \Cg: the square grid.

\begin{defi} \label{defsplpr}
A  {\em (special) \plpr} is an \empr\ $(\PF, \sigma, \tau)$ such that 
\begin{enumerate}[(sP1)]
\item \label{plpri} no two relators $W,Z\in \RF$ cross, and 
\item \label{plprii} for every relator $R$, the number of occurrences of letters $s$ in $R$ with $\tau(s)=1$ (i.e.\ \srev\ letters) is even; here, the symbol $s^n$ counts as $|n|$ occurrences of $s$.
\end{enumerate}
\end{defi}

Requirement (sP\ref{plprii}) is necessary, as the spin of the starting vertex of a cycle must coincide with that of the last vertex. 

The following lemma will later allow us to assume without loss of generality that
no relator of $\PF$ is a sub-word of a rotation of another relator.

%
%

\comment{
\begin{lem} \label{auxcross}
Let $T$ be a finite tree embedded in the plane, and let $A,B,C,D,E,F$ be distinct leaves of $T$. Suppose $\bigcap aTd, cTd, bTe, bTf \neq \emptyset$. Then at least two of the $\binom{4}{2} $ pairs of paths formed from $aTd, cTd, bTe, bTf$ cross.
\end{lem}
\begin{proof}
This is an easy combinatorial exercise that can be solved by a straightforward case analysis.
\end{proof}
}


\begin{lem} \label{nosubwords}
Let $(\PF=\left< \SF, \RF \right>, \sigma, \tau)$ be a finite \splpr. Then there is a \splpr\ $(\PF'=\left< \SF, \RF' \right>, \sigma, \tau)$ such that $\PF$ and $\PF'$ yield the same \Cg, and 
no element of~$\RF'$ is a proper subword of another element of~$\RF'$.
\end{lem}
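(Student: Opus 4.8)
The plan is to reduce the whole statement to a single \emph{replacement step} and then iterate. We may assume at the outset that all relators are nontrivial and cyclically reduced as elements of the group of $\ttt$; this is no loss of generality, since each $s\in\IF$ contributes the relator $s^2$ to $\RF$, so replacing a relator by a word representing the same element of that group changes neither the group nor (by a one-line parity count) condition (sP\ref{plprii}), and cyclic reduction does not affect the \dray s $W^\infty$ that govern crossings. Now suppose some relator $A\in\RF$ is a subword of a rotation $B^{\mathrm{rot}}=AU$ of another relator $B\in\RF$. Since $B^{\mathrm{rot}}$ is a conjugate of $B$ and $A\in\RF$, the normal closures satisfy $N(\RF)=N\big((\RF\setminus\{B\})\cup\{B^{\mathrm{rot}}\}\big)$, and because $U=A^{-1}B^{\mathrm{rot}}$ while $B^{\mathrm{rot}}=AU$, also $N(\RF)=N(\RF')$ for $\RF':=(\RF\setminus\{B\})\cup\{U\}$ (reducing $U$ once more if it is not reduced). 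Hence $\PF$ and $\PF'$ determine the same group, and therefore the same \Cg. As $|U|<|B|$ and no relator is ever lengthened, iterating this step is well-founded: for infinite $\RF$ one orders the steps fairly and uses that each relator's length is a monotonically decreasing non-negative integer, so it stabilises, and the limit presentation has no proper containments. It thus suffices to check that a single step preserves the two defining conditions of a \splpr.

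Condition (sP\ref{plprii}) is immediate: the number of \srev\ letters is additive over concatenation, so the count in $B^{\mathrm{rot}}=AU$ equals the count in $A$ plus the count in $U$; the first two are even (they hold for $A$ and for $B^{\mathrm{rot}}$), hence so is the count in $U$.

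For (sP\ref{plpri}) I would isolate the following master claim, from which everything follows via the crossing/nestedness dichotomy:
\begin{equation*}
V \text{ nested with } A \ \Longrightarrow\ \big(\,V \text{ nested with } B^{\mathrm{rot}}=AU \text{ precisely when } V \text{ nested with } U\,\big). \tag{$\star$}
\end{equation*}
Granting $(\star)$, every pair of relators in $\RF'$ is nested. Pairs inside $\RF\setminus\{B\}$ are inherited from (sP\ref{plpri}) for $\RF$. For a relator $V\in\RF\setminus\{B\}$ and the new relator $U$, note that $V$ is nested with $A$ and with $B^{\mathrm{rot}}$ (both by (sP\ref{plpri}) for $\RF$), so $(\star)$ gives $V$ nested with $U$. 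Taking $V:=A$ and then $V:=B^{\mathrm{rot}}$ in $(\star)$ shows that $A$ and $B^{\mathrm{rot}}$ are each nested with $U$. Finally, applying $(\star)$ once more with $V:=U$ (which we have just shown to be nested with $A$) converts ``$U$ nested with $B^{\mathrm{rot}}$'' into ``$U$ nested with $U$'', ruling out a self-crossing of the new relator. Hence $(\PF',\sigma,\tau)$ is again a \splpr.

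It remains to prove $(\star)$, which is the heart of the argument and the step I expect to be the main obstacle. I would work inside the embedded tree $\ttt$. The \dray\ $D_B$ induced by $(AU)^\infty$ decomposes into alternating \emph{$A$-segments} and \emph{$U$-segments}, each being a translate, under the colour-preserving left action of the group of $\ttt$, of one period of $D_A=A^\infty$, respectively $D_U=U^\infty$; erasing the $A$-segments turns $D_B$ into a \dray\ inducing $U^\infty$. The crucial point is that nestedness of the \emph{words} $V$ and $A$ means, by definition, that \emph{every} \dray\ induced by $V^\infty$ is nested with \emph{every} \dray\ induced by $A^\infty$; since left translates of $A$-\dray s are again induced by $A^\infty$, each $A$-segment of $D_B$ lies on a \dray\ that no \dray\ $D_V$ induced by $V^\infty$ crosses. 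Consequently $D_V$ cannot pass from one side of $D_B$ to the other while $D_B$ runs along an $A$-segment, so the two sides of $D_B$ and the two sides of the associated $U^\infty$-ray agree away from the $A$-segments, to one side of each of which $D_V$ is confined. A plane-separation argument then yields that $D_V$ meets both components of $\R^2\setminus D_B$ if and only if it meets both components of $\R^2\setminus U^\infty$, which is exactly $(\star)$. The delicate part is making this ``confinement'' precise---formalising the identification of sides across the deleted $A$-segments and verifying it at the junction vertices where an $A$-segment abuts a $U$-segment---and this is where the nestedness of $V$ with $A$, promoted to all left translates, does the real work.
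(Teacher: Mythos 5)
Your overall architecture (a single replacement step $B^{\mathrm{rot}}=AU\rightsquigarrow U$, the parity count for (sP\ref{plprii}), and the reduction of (sP\ref{plpri}) to a master nesting claim) matches the paper's, and the group-theoretic and parity parts are fine. The gap is in the master claim $(\star)$: it is not true as stated, and the sketched proof of it rests on an incorrect picture. In the tree $\ttt$, a \dray\ $D_B$ induced by $(AU)^\infty$ and a \dray\ $D_U$ induced by $U^\infty$ share only a single $U$-period and then diverge at both of its ends; ``erasing the $A$-segments of $D_B$'' does not produce a \dray\ inducing $U^\infty$ --- it produces infinitely many pairwise disjoint finite paths, one per period, which do not concatenate and do not determine the two sides of $D_U$ globally. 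The real difficulty is local, at the two junctions of the shared period, where the first and last letters of $U$ and of $A$ compete for position in the spin $\sigma$; this is exactly what the paper analyses with its four \dray s $oR^{\infty}$, $o(R_2W)^{\infty}$, $oR_2^{\infty}$, $oW^{\infty}$ and the regions $A_1,\dots,A_6$.

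Concretely, $(\star)$ fails in the configuration that the paper's Cases 4 and 5 must exclude by other means: a \dray\ $T$ induced by some word $V$ can cross $oU^\infty$ near $o$, run along the common initial path $oA$ of $oA^\infty$ and $o(AU)^\infty$, and then escape into the wedge between $oA^{+\infty}$ and $o(AU)^{+\infty}$; both end-edges of the relevant subwalk then lie on the \emph{same} side of $oA^\infty$ and on the same side of $o(AU)^\infty$, so $T$ crosses neither, yet it crosses $oU^\infty$. The paper does not deduce a crossing of $R$ or $W$ there; instead it first arranges that $oW^\infty$ and $oR^\infty$ are \emph{neighbours} (no relator-induced \dray\ lies between them) and then observes that any such $T$ would be induced by a sub- or superword of $W$ lying between them, a contradiction. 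Your proposal omits this neighbour-selection step and offers no substitute, so the inductive step does not go through. To repair it you would need to add the hypothesis (and the preliminary choice guaranteeing it) that no relator-induced \dray\ lies between $oA^\infty$ and $o(AU)^\infty$, and then carry out the local spin analysis at the junctions --- which is essentially the paper's proof of its claim \eqref{R2}. You would also need to treat separately the degenerate case where $oA^\infty$ and $o(AU)^\infty$ coincide (e.g.\ $A$ and $B$ are powers of a common word), where the region picture collapses; the paper disposes of this case first.
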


\begin{proof}

We will perform induction on the total length of the words in $\RF$.

Suppose that $\RF$ contains a word $W$ and a (rotation of a) proper superword $R=WR_2$ of~$W$. 
To begin with, we may assume that the double rays $oW^\infty$ and $oR^\infty$ induced by them do not coincide. For if this is the case, then $W=U^n$ and $R=U^m$ for some common subword $U$, and it is easy to modify $\RF$ to avoid this situation by replacing $R,W$ with an appropriate power of $U$.

We may further assume that the double rays $oW^\infty$ and $oR^\infty$ are `as close as possible' to each other in the following sense.

Let \ttt\ be the plane tree corresponding to $\PF, \sigma, \tau$ as defined before Definition~\ref{defsplpr}.
Given three double rays $P,S,T$ in \ttt\ which are pairwise non-crossing, we say that $S$ lies \emph{between} $P,T$ if $\bigcap \{P,S,T\} \neq \emptyset$ and  $P$ and $T$ lie in distinct components of $\R^2 \sm S$. 

Note that if we fix $P$ and $T$, then there can only be finitely many double rays $S$ induced by a word in $\RF$ lying {between} $P,T$ because $\RF$ is finite.

We say that the double rays $P,T$ in \ttt\ are \emph{neighbours}, if no double ray  induced by a word in $\RF$ lies between them.

Now if $\RF$ contains a word $W$ and a proper superword $R=WR_2$ of $W$, we may assume that the double rays $oW^\infty$ and $oR^\infty$ induced by them are neighbours, because any periodic double ray lying between them contains the path $oW$ and is therefore induced by a superword or subword of $W$.

Our aim is to replace the word $R$ by its subword $R_2$ to obtain an equivalent presentation that is closer to satisfying our assertion than $\PF$ is. Therefor, we need to show that $R_2$ does not cross any of our relators. This will be a consequence of
\labtequ{R2}{Any word in $\RF\cup\{R_2\}$ crossing $R_2$ (with respect to the spin data $\sig,\tau$) also crosses $R$ or $W$.}

Let $a,z$ denote the first and last letter of $R$ respectively. Let $y$ denote the last letter of $W$, and $d$ the first letter of $R_2$. Note that $W$ starts with $a$ too, and $R_2$ ends with $z$. We have $a\neq y^{-1}$ because $W$ is reduced, and $a\neq z^{-1}, d\neq y^{-1}$ because $W$ is reduced. 

Furthermore, we may assume that $y\neq z$: for otherwise we can rotate both $W$ and $R$ by moving the letter $y=z$ from the end to the beginning, extending the intersection of the two words; here we used the fact that $R^\infty$ cannot coincide with $W^\infty$ as we noted above.

Thus $a, z^{-1}, y^{-1}$ are all distinct; let us assume that they appear in \sig\ in that order. Our first task is to decide the relative position of $d$ in \sig\ with respect to those letters. It is still possible that $d$ coincides with $z^{-1}$ or $a$.

Recall that \sig\ is a cyclic ordering on our letters $\SF \cup \SF'$. We use the notation $\sig(l,m)$ to denote those letters coming after $l$ and before $m$ in \sig. If we want to include $l$ or $m$ we use the notation $\sig[l,m)$ or $\sig(l,m]$.
 
If $d\in \sig(a,z^{-1})$, then $R$ would cross itself as its rotations contain both $za$ and $yd$ as subwords, which is impossible by~(sP\ref{plpri}).

If $d\in \sig(y^{-1},a)$, then $R$ would cross $W$ 
as can be seen in \fig{regions} by observing the double ray whose two ends are marked $o(R_2 W)^{+\infty}$, $o(R_2 W)^{-\infty}$, which double ray is induced by $R$; here we used the fact that the vertex $w$ at which the path $oW$ ends  has the same spin as $o$ in the embedding of \ttt\ we used to define crossings, because $W$ satisfies~(sP\ref{plprii}).

If $d=a$, then we can apply one of the two above arguments to the first vertex at which the rays $oR^{\infty}$ and $oW^{\infty}$ split to prove that $R$ crosses either itself or $W$, which is again a contradiction.

These facts combined prove that $d\in \sig[z^{-1},y^{-1})$. We have now gathered enough information about \sig\ to allow us to prove \eqref{R2}. 

Suppose that $R_2$ crosses some word $X$, i.e.\ there are two crossing double rays $P,T$ where $P$ is induced by $R_2$ and $T$ is induced by  $X$. 

Let $Q = e\kreis{Q}f \subseteq T$ be a crossing of $P$ by $T$ as defined in \Sr{secDem} (\fig{XYQ}), and let $Q' = g\kreis{Q}h\subseteq P$ be its dual crossing.


Let us assume first that $d\neq z^{-1}$; the case $d= z^{-1}$ will be similar.

We can translate $P$ and/or $T$ by some automorphism of \ttt\ so as to ensure that the edge $e$ is incident with the path $oR_2$ induced by $R_2$ at $o$ (\fig{regions}), and in fact $e$ is not incident with the last vertex of that path (but may be incident with $o$). 

\medskip
\begin{figure}[htbp]
\begin{overpic}[width=.95\linewidth]{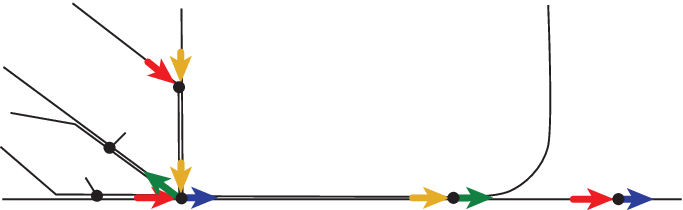}
\put(25,-2){$o$}
\put(60,-2){$w:=oW$}
\put(60,-2){$w:=oW$}
\put(88,-2){$oR$}
\put(29,-2){$a$}
\put(22,-2){$z$}
\put(28,4){$y$}
\put(22,6){$d$}
\put(-4,1){$oR^{-\infty}$}
\put(101,1){$oR^{+\infty}$}
\put(24,30){$oW^{-\infty}$}
\put(78,31){$oW^{+\infty}$}
\put(13,5){$f$}
\put(18,12){$e$}
\put(-5,8.5){$oR_2^{-\infty}$}
\put(-5,14){$oR_2^{+\infty}$}
\put(4,31){$o(R_2 W)^{-\infty}$}
\put(-3,22){$o(R_2 W)^{+\infty}$}
\put(8,8){$A_1$}
\put(15,18){$A_2$}
\put(50,15){$A_3$}
\put(50,-3){$A_4$}
\put(21,27){$A_5$}
\put(85,10){$A_6$}
\end{overpic}

   \caption{\small The four double rays $oR^{\infty}, o(R_2 W)^{\infty}, oR_2^{\infty}$ and $ oW^{\infty}$ and the regions $A_i$ they define. Here, the notation $oR^{+\infty}$ means the ray starting at $o$ induced by the 1-way infinite word obtained by repeating $R$; $oR^{-\infty}$ is defined similarly by repeating the word $R^{-1}$ instead.} \label{regions}

\end{figure}
Let us fix the embedding $\rho$ of \ttt\ in the plane complying with $\sig,\tau$ as described before Definition~\ref{defsplpr}. We use the four double rays $oR^{\infty}, o(R_2 W)^{\infty}, oR_2^{\infty}$ and $ oW^{\infty}$ to divide the plane into regions $A_i$ as shown in \fig{regions}. Then the edge $e$ must lie in one of these regions, and in each case we obtain a contradiction as follows. 

To begin with, note that exactly one of $e,f$ lies in $A_1$, and we may assume without loss of generality that $f$ does, so $e$ does not lie in $A_1$.

{\bf Case 1}: If $e$ lies in $A_2$, then $T$ crosses not only $P$ but also the double ray $o(R_2 W)^{\infty}$. But since $R_2 W$ is a rotation of the word $R= W R_2$, \eqref{R2} is proved in this case. 

{\bf Case 2}: If $e$ lies in $A_3$, (which can only happen if $e$ is incident with $o$), then $T$ crosses $ oW^{\infty}$ and again \eqref{R2} holds. 

{\bf Case 3}: If $e$ lies in $A_4$, then $T$ crosses $ oR^{\infty}$, and  again \eqref{R2} is proved. 

{\bf Case 4}: If $e= oa$, then $X\neq R_2$ as $oR_2^\infty$ does not contain the $oa$.
Furthermore, as $T$ cannot enter the regions $A_3, A_4$ for the aforementioned reasons, it has to contain the whole path $oW$ and then continue in the closure of the region $A_6$. But then some rotation of the relator $X$ inducing $T$ is either a subword or a superword of $W$, and moreover $T$ lies between
 $ oW^{\infty}$ and $oR^{\infty}$. This however contradicts our choice of $R,W$ to be neighbours.

{\bf Case 5}: If $e= oy^{-1}$, then we again have $X\neq R_2$ as $oy^{-1}$ does not lie on $oR^\infty$.
We apply a similar argument as before: as $T$ cannot enter the regions $A_2, A_3$, it has to contain the whole path $oW^{-1}$ and then continue in the closure of the region $A_5$. Then  
$T$ lies between $ oW^{\infty}$ and $o(R_2 W)^{\infty}$. As the latter can be induced by $R$, this again contradicts our choice of $R,W$ as neighbours.

These contradictions complete the proof of \eqref{R2} in the case where $d\neq z^{-1}$. The case $d= z^{-1}$ is very similar: the only difference is that the region $A_1$ is a bit smaller in \fig{regions}.

\medskip
We now claim that $R_2$ crosses none of the words in $\RF \cup \{R_2\}$. 

Indeed, if $R_2$ crosses a word in $\RF$, then applying \eqref{R2} to that word we obtain a contradiction to the fact that $\PF$ was a \splpr.

If $R_2$ crosses itself, then applying \eqref{R2} again we deduce that $R_2$ crosses $R$ or~$W$, which are elements of $\RF$. But then we are in the previous situation, which cannot occur (here we used the fact that crossing is a symmetric relation).

Keeping the spin data $\sigma, \tau$, it is clear that $R_2$ satisfies (sP\ref{plprii}) as both $W$ and $R$ satisfied that property. 

Since $\PF$ satisfied (sP\ref{plpri}), and we have just proved that $R_2$ crosses neither itself nor any other relator in $\RF$, this means that the presentation obtained from $\PF$ by adding $R_2$ as a relator is still a \splpr. Note that the relator $R= WR_2$ now becomes redundant, and we can remove it. Hence we obtain a \splpr\ of the same \Cg\ in which the relators have strictly smaller total length.

We can repeat this for as long as there are relators in our presentation that are a subword of each other. Since the total length decreases in each step, the process terminates after finitely many steps yielding the desired presentation.
\comment{
OLD PROOF

Keeping the spin data $\sigma, \tau$, it is clear that $R_2$ satisfies (sP\ref{plprii}) as both $W$ and $R$ satisfied that property. Thus it only remains to check that $R_2$ crosses neither itself nor any other relator in $\RF \sm R$, because as $\PF$ satisfied (sP\ref{plpri}), no other pair of relators can cross.

SAY THAT $W,R$ ARE AS CLOSE AS POSSIBLE TO EACH OTHER OR SIMILAR

Let $a,z$ denote the first and last letter of $R$ respectively. Let $y$ denote the last letter of $W$, and $d$ the first letter of $R_2$. Note that $W$ starts with $a$ too, and $R_2$ ends with $z$. We have $a\neq y^{-1}$ because $W$ is reduced, and $a\neq z^{-1}, d\neq y^{-1}$ because $W$ is reduced. 

Furthermore, we may assume that $y\neq z$: for otherwise we can rotate both $W$ and $R$ by moving the letter $y=z$ from the end to the beginning, extending the intersection of the two words.

Thus $a, z^{-1}, y^{-1}$ are all distinct; let us assume that they appear in \sig\ in that order. Our first task is to decide the relative position of $d$ in \sig\ with respect to those letters. It is still possible that $d$ coincides with $z^{-1}$ or $a$.

If $d\in \sig(a,z)$,\?{define} then $R$ would cross itself as its rotations contain both $za$ and $yd$ as subwords, which is impossible by~(sP\ref{plpri}).

If $d\in \sig(a,z)$, then $R$ would cross $W$ as displayed in \fig{RW}, where we used the fact that the vertex $w$ at which the path $oW$ ends  has the same spin as $o$ in the embedding of~\ttt\ we used to define crossings, because $W$ satisfies~(sP\ref{plprii}).

If $d=a$, then we can apply one of the two above arguments to the first vertex at which the rays $oR^{\infty}$ and $oR^{\infty}$ split to prove that $R$ crosses either itself or $W$, which is again a contradiction.

These facts combined prove that $d\in \sig[z,y)$. We have now gathered enough information about \sig\ to allow us to prove that no relator crosses $R_2$.

Suppose, to the contrary, that $R_2$ crosses some other relator, i.e.\ there are two crossing double rays $P,T$ where $P$ is induced by $R_2$ and $T$ is induced by some element $X$ of $\RF$. 

Let $Q = e\kreis{Q}f \subseteq T$ be a crossing of $P$ by $T$ as defined ...

Note that since $X$ cannot cross $R$ because $\PF$ was a \splpr, $g\kreis{Q}h$ or its inverse must contain $zd$ as a subword. 

We can translate $P$ and/or $T$ by some automorphism of \ttt\ so as to ensure that the edge $e$ is incident with the path $oR_2$ induced by $R_2$ at $o$ (\fig{regions}), and in fact $e$ is not incident with the last vertex of that path (but may be incident with $o$). 

Let us fix an embedding $\rho$ of \ttt\ in the plane complying with $\sig,\tau$ as described before Definition~\ref{defsplpr}. We use the four double rays $oR^{\infty}, o(R_2 W)^{\infty}, oR_2^{\infty}$ and $ oW^{\infty}$ to divide the plane into regions $A_i$ as shown in \fig{regions}. Then the edge $e$ must lie in one of these regions, and in each case we obtain a contradiction as follows. 

To begin with, note that exactly one of $e,f$ lies in $A_1$, and we may assume without loss of generality that $f$ does, so $e$ does not lie in $A_1$.

--If $e$ lies in $A_2$, then $T$ crosses not only $P$ but also the double ray $o(R_2 W)^{\infty}$. This however is impossible, as $R_2 W$ is a rotation of the word $R= W R_2$ which belongs to $\RF$.

--If $e$ lies in $A_3$, (which can only happen if $e$ is incident with $o$), then $T$ crosses $ oW^{\infty}$ which again contradicts the fact that $\PF$ was a \splpr.

--If $e$ lies in $A_4$, then $T$ crosses $ oR^{\infty}$, again a contradiction.

--If $e= oa$, then as $T$ cannot enter the regions $A_3, A_4$ for the aforementioned reasons, it has to contain the whole path $oW$ and then continue in the closure of the region $A_4$. But then some rotation of the relator $X$ inducing $T$ is either a subword or a superword of $W$, and moreover $T$ is closer to $ oW^{\infty}$ than $oR^{\infty}$ is. This however contradicts our choice of $R,W$.

--If $e= oy^{-1}$, then we apply a similar argument: as $T$ cannot enter the regions $A_2, A_3$, it has to contain the whole path $oW^{-1}$ and then continue in the closure of the region $A_5$. Then again some rotation of  $X$ is either a subword or a superword of $W$, and moreover $T$ is closer to $ oW^{\infty}$ than $o(R_2 W)^{\infty}$ is. As the later can be induced by $R$, this again contradicts our choice of $R,W$.

\medskip
This completes the proof that $R_2$ does not cross any element of $\RF$. It remains to show that $R_2$ does not cross itself.

So suppose now that $Q = e\kreis{Q}f \subseteq T$ ... is a crossing of a double ray $P$ as above by a double ray $T$ induced by some other rotation of $R_2$.

If $Q$ does not contain a subpath induced by $zd$ or $d^{-1}z^{-1}$, then $Q$ also lies in a double ray $P'$ induced by $R^\infty$ because $R= W R_2$ and  $z$ is the last letter of $R$ (and hence $R_2$) and $d$ is the first letter of $R_2$. But this means that $R$ crosses $R_2$, which we proved above that is not true.

Therefore $Q$ must contain a subpath induced by $zd$ or $d^{-1}z^{-1}$, and by the same arguments the same holds for $Q'=g\kreis{Q} h$. We may assume, by reversing directions if needed, that $Q$ contains a subpath $Z$ induced by $zd$. Let $x$ denote the middle vertex of $Z$. We will distinguish three cases according to whether $Z$ is an initial, interior, or final subpath of $Q$; the corresponding situations are depicted in \fig{zd}.

...
\labtequ{QR2}{$|Q|<|R_2|$.}

--If $\kreis{Q}$ contains $Z$, then consider the two edges $i,j$ labelled $a$ and $y^{-1}$ starting at the middle vertex $x$ of $Z$. Note that both $i,j$ lie on the same side of $\kreis{Q}$, because we have proved that \sig\ puts the letters $a, z^{-1}, d, y^{-1}$ in that order {fix if $d=z$}. Let $Qx$ and $xQ$ denote the two subpaths into which $x$ separates $Q$. Consider the two paths $Q_1:= Qx i$ and $Q_1:= jxQ$ obtained by attaching the edge $i$ or $j$ to those subpaths of $Q$. Note that each of $Q_1, Q_2$ is contained in some double ray induced by $R$ because of the fact that $Q$ is contained in a double ray induced by $R_2$ and the choice of $Z$. By an elementary topological argument, $T$ crosses one of those two double rays because it crosses $Q$ (which one it crosses depends on whether $i,j$ lie on the left or the right of $Q$). This means that the relator $R_2$ crosses the relator $R$. But we have proved that this is not possible, and so we reach a contradiction.

--If $Q$ starts with $Z$, then $Q'$ cannot start with the letters $zd$ too. Since $Q'$ must also contain a subpath $Z'$ induced by $zd$ or $d^{-1}z^{-1}$, we can either reduce to the previous situation with the roles of $P$ and $T$  interchanged, or $Q'$ ends with $Z'$.

In the latter case, we are in one of the two situations of \fig{ZZ'}, depending on whether $Z'$ is induced by $zd$ or $d^{-1}z^{-1}$. Consider first the case where $Z'$ is induced by $zd$. Let $P'$ be the double ray induced by $(R_2 W)^\infty$ at $x$, and note that $P'$ can also be induced by $R^\infty$ (at $xW$). Since $|Q|<|R_2|$ by ..., $P'$ contains the edge $f$.

Similarly, let $T'$ be the double ray induced by $R^\infty$ at the other endvertex $y$ of $Q$. Then $T'$ contains $g$.

We are going to apply \Lr{} to an appropriate subtree of \ttt\ spanned by subpaths of $P,P',T,T'$ to deduce that $P,T$ is not the only crossing pair among them.

For this, note first that $P,P',T,T'$ are all distinct, because for each pair one of the edges $e,f,g,h$ is in one member and not in the other. Since they are all induced by periodic words, their pairwise intersections are all finite. 


--If $Q$ ends with $Z$, then by interchanging the roles of $P$ and $T$ we can revert to one of the previous cases unless $Q'$ starts with $Z'$ and $Z'$ is induced by $d^{-1}z^{-1}$, in which case we have the situation of the right hand side of \fig{ZZ'}. This case however would imply that $\kreis{Q}$ and its inverse are induced by the same word, which word is then an initial subword of both $R_2$ and $R_2^{-1}$; but this contradicts the fact that $R$ is reduced.\?{check}
}
\end{proof}

\subsection{Existence of \splpr s}
We now prove that planar $3$-connected \Cg s admit \splpr s.

\begin{thm}\label{thm_3con}
Every planar, locally finite, $3$-connected \Cg\ admits a \splpr.
\end{thm}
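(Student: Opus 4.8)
The plan is to read the relators directly off a geometrically nice generating set of the cycle space, so that nestedness of cycles becomes condition (sP\ref{plpri}) almost for free. First I would fix an arbitrary planar embedding of the given graph $G$. By \Lr{lprem} this embedding is \cns, and hence it determines a well-defined spin $\sigma$ on $\SF'$ (the common cyclic order of the colours around a vertex, recorded up to reflection) together with a map $\tau:\SF\to\{0,1\}$ recording for each colour whether its edges are \spre\ or \srev; for $s\in\IF$ the single-edge convention makes $\sigma$ a genuine cyclic order of $\SF'$. This fixes the spin data of the sought \empr\ $(\PF,\sigma,\tau)$, and with it the plane tree $\ttt$ of \Dr{defsplpr}; only the relator set remains to be chosen.

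Next I would apply \Tr{thm_fundgr} with $\Gamma=\Gamma(G)$, which is legitimate since $G$ is planar, locally finite and $3$-connected. This produces a $\Gamma$-invariant, nested generating set $\VF^\circ$ of $\WF(G)$ consisting of indecomposable closed walks. Since indecomposable walks induce cycles, each member of $\VF^\circ$ is a \emph{finite} cycle, bounding a face of the embedding, and as $\Gamma$ acts transitively on vertices while $G$ is locally finite, these cycles split into only finitely many $\Gamma$-orbits. Because $\Gamma$ preserves edge-colours, all translates of a given closed walk read off the same colour word, so each orbit contributes a single word; I let $\RF$ consist of these finitely many words together with the involution relators $s^2$ for $s\in\IF$. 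As the $\Gamma$-translates of the representatives recover all of $\VF^\circ$, which generates $\WF(G)$ and hence $H_1(G)$, the correspondence underlying \Lr{relcc} guarantees that $\PF=\langle\SF\mid\RF\rangle$ presents $\Gamma(G)$ with Cayley graph $G$.

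It then remains to verify the two axioms. For (sP\ref{plprii}), each relator is a closed walk in $G$, so traversing it returns the spin to its initial value; since a \srev\ letter flips the spin while a \spre\ one preserves it, the number of \srev\ letters is even (the degenerate relators $s^2$ contribute two occurrences, hence trivially an even count). For (sP\ref{plpri}), all chosen representatives, based at a fixed vertex, lie in the nested set $\VF^\circ$, so no two of them cross \emph{as closed walks in $G$}; invoking the equivalence recorded after \Dr{defsplpr}, that words forming closed walks cross in $\ttt$ exactly when the walks cross in $G$, this transfers to non-crossing of the corresponding \emph{words}, which is precisely (sP\ref{plpri}). The relators $s^2$ induce degenerate (backtracking) rays in $\ttt$ and so cross nothing.

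The step I expect to be the main obstacle is justifying the word-versus-walk crossing equivalence on which (sP\ref{plpri}) rests: one must show that the abstract crossing relation defined in $\ttt$, built solely from the combinatorial data $(\sigma,\tau)$, faithfully reproduces the geometric nesting of cycles in the given embedding of $G$. This is exactly where consistency (\Lr{lprem}) is indispensable, since it is what makes $(\sigma,\tau)$ uniform across all vertices and thereby turns $\ttt$ into a faithful local model of the rotation system of $G$. A secondary technical point is confirming that the indecomposable members of $\VF^\circ$ are genuinely finite face boundaries, so that the orbit-finiteness argument---and hence the finiteness of $\RF$---goes through.
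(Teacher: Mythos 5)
Your overall route is the paper's: fix an embedding, use \Lr{lprem} to extract the spin data $(\sigma,\tau)$, invoke \Tr{thm_fundgr} to get a $\Gamma$-invariant nested generating set of indecomposable closed walks, and then read off (sP1) from nestedness and (sP2) from the fact that a closed walk must return to its starting spin. All of that matches the paper and is sound. The step you yourself flag as the "main obstacle" (transferring crossing of walks in $G$ to crossing of words in $\ttt$) is exactly what the paper dispatches with the remark following the definition of crossing together with \Lr{lprem}, so that is not where the problem lies.

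The genuine gap is your finiteness argument for $\RF$. You assert that each member of $\VF^\circ$ is a face boundary and deduce that there are only finitely many $\Gamma$-orbits. \Tr{thm_fundgr} gives indecomposable nested closed walks, not facial ones, and in general they cannot all be facial: a $3$-connected planar \Cg\ that is not VAP-free has faces with infinite boundary, which contribute no finite cycles at all, so the generating set must contain non-facial cycles. Moreover, even a nested family of indecomposable cycles through a fixed vertex $o$ can be infinite (think of larger and larger nested cycles through the origin of the square grid all using the same two edges at $o$); the paper explicitly warns that the set $\DF'$ of elements of $\DF^\circ$ through $o$ "might be infinite". The paper's fix is different: it first invokes Droms' theorem that $\Gamma$ is finitely presented and then uses Tietze transformations to extract a \emph{finite} subset $\RF'\sub\RF_{\DF'}$ that still presents $\Gamma$; nestedness of $\RF'$ is inherited from nestedness of the larger set. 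Strictly speaking, the theorem as stated does not demand finiteness, so your construction with one word per orbit (possibly infinitely many) would still yield a \splpr; but the finiteness you claim is needed for the enumeration corollaries, and your argument for it does not work --- replace it with the Droms--Tietze step.
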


\begin{proof}
Let $G$ be a planar, locally finite, $3$-connected \Cg, and let $\Gamma:=\Gamma(G)$ be its group.
By Droms~\cite[Theorem~5.1]{DroInf}, $\Gamma$ admits a finite presentation $\PF=\left<\SF\mid\RF\right>$.
We may replace the generators~\SF\ by those finitely many generators that we used to obtain the \Cg~$G$, that is, we may assume that \SF\ was used to obtain~$G$.
Let $\DF$ be a generating set of~$\pi_1(G)$ such that $\DF^\circ$ is a nested $\Gamma$-invariant set of indecomposable closed walks in~$G$.
This exists by Theorem~\ref{thm_fundgr}.
Let $\DF'\sub\DF^\circ$ be the subset of those elements of~$\DF^\circ$ that contain the vertex~$o$.
Then the set $\RF_{\DF'}$ of words corresponding to the elements of~$\DF'$ yields a presentation $\PF'=\left<\SF\mid\RF_{\DF'}\right>$ of~$\Gamma$ since $\DF$ generates $\pi_1(G)$.
Note that a priori the set $\DF'$, and hence also $\RF_{\DF'}$, might be infinite.
As $\Gamma$ is finitely presented, it is well-known, see e.\,g.~\cite{Baumslag-Topics}, that we can use Tietze-transformations to obtain a finite subset $\RF'$ of~$\RF_{\DF'}$ such that $\left<\SF\mid\RF'\right>$ is a finite presentation of~$\Gamma$.
We claim that $(\left<\SF\mid\RF'\right>,\sigma,\tau)$ is a planar presentation, where $\sigma$ is the spin of~$1$ in some embedding $\rho$ of~$G$ in $\R^2$, and $\tau(s)$ is 0 for those $s\in \SF$ \st\ the spin of $1$ coincides with the spin of the vertex $s$ of $G$ in $\rho$ (and $\tau(s)$ is 1 for every other $s\in \SF$).

Indeed, it is easy to check using the nestedness of the finitely many closed walks that correspond to the relators~$\RF'$ and \Lr{lprem} that no two elements of $\RF'$ cross.
\end{proof}


Recall the notion of almost planarity introduced in Section~\ref{subsec_results}: a Cayley complex~$X$ of a presentation $\left<\SF\mid\RF\right>$ is \emph{almost planar} if there is a mapping $\rho\colon X\to\real^2$ such that $\rho$ is injective on the 1-skeleton of~$X$, and
for every two $2$-simplices of~$X$, the images of their interiors under $\rho$ are either disjoint or one of these images is contained in the other. (Here, we are using our convention that elements $s$ of $\SF$ such that $s^2$ is a relator in $\RF$ give rise to single, undirected edges in $X$.)
\Tr{thm_3con} has the following consequence, which was conjectured in \cite{cayley3}.

\begin{cor}\label{cor_3con}
Every planar, locally finite, $3$-connected planar \Cg~$G$ is the $1$-skeleton of an almost planar Cayley complex of the group~$\Gamma(G)$ of~\G.
\end{cor}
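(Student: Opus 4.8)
The plan is to deduce Corollary~\ref{cor_3con} directly from Theorem~\ref{thm_3con} by unpacking the definitions and exhibiting the required map $\sigma\colon X \to \R^2$. First I would invoke Theorem~\ref{thm_3con} to obtain a \splpr\ $(\PF = \left<\SF \mid \RF\right>, \sigma_0, \tau)$ of our $3$-connected \Cg~$G$, where $\PF$ is a presentation of $\Gamma(G)$ using exactly the generating set that produced~$G$. Since $G$ is planar, I would fix an embedding $\rho\colon G \to \R^2$; by \Lr{lprem} this embedding is \cns, so it realises the spin $\sigma_0$ and the \spre/\srev\ pattern recorded by $\tau$. The map $\rho$ already embeds the $1$-skeleton of the Cayley complex~$X$ injectively into the plane, so the only remaining task is to extend it over the $2$-simplices and verify the nestedness condition in the definition of almost planarity.

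The key step is to show that each $2$-cell of~$X$ can be drawn in the plane so that the interiors of any two $2$-simplices are either disjoint or one contains the other. Each $2$-cell of~$X$ is glued along the closed walk in~$G$ induced by some relator $R \in \RF$ (at some vertex $g$); since the relators are indecomposable closed walks coming from the generating set $\DF^\circ$ of Theorem~\ref{thm_fundgr}, each such walk induces a \emph{cycle} in~$G$ (indecomposable closed walks induce cycles, as noted before Definition~\ref{defsplpr}). A cycle in the planar graph $G$ bounds a bounded region of $\R^2 \sm \rho(G)$, and I would map the interior of the corresponding $2$-simplex homeomorphically onto (a subregion of) that bounded region. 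It then has to be checked that for two relators $R, W \in \RF$ and any two translates $gR$, $hW$ of the induced cycles, the bounded regions they bound are nested as planar sets. This is precisely where the non-crossing property (sP\ref{plpri}) of the \splpr\ is used: two cycles in a planar graph whose induced closed walks do not cross bound regions that are nested (disjoint interiors, or one inside the other), because two simple closed curves in the plane with nested or disjoint interiors are exactly those that do not cross transversally.

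The main obstacle I anticipate is bookkeeping rather than conceptual: one must translate the combinatorial notion of crossing (defined via the tree $\ttt$ and the spin data, or equivalently via the crossing of closed walks in the \Cg) into the purely topological statement that the bounded faces enclosed by the two cycles in~$\rho(G)$ are nested. Here the remark immediately after the definition of crossing --- that for non-trivial words forming closed walks in the \Cg, the words cross if and only if the closed walks cross --- does the crucial work, since it lets me pass freely between the abstract spin-based crossing used to verify (sP\ref{plpri}) and the genuine planar crossing of the cycles in~$\rho(G)$. A minor technical point is that two distinct group elements $g \neq h$ may induce cycles sharing part of their boundary or even coinciding; but since $\Gamma$ acts on~$G$ by automorphisms preserving~$\rho$ up to the \cns\ structure, and the generating set $\DF^\circ$ is $\Gamma$-invariant, the nestedness established for relators based at~$o$ transfers to all their translates, and coinciding boundaries are handled by the convention on involutions (single undirected edges) already built into the definition of almost planarity. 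Assembling these observations, the map $\sigma$ obtained by extending $\rho$ over all the $2$-cells witnesses that $X$ is almost planar and has $G$ as its $1$-skeleton, proving the corollary.
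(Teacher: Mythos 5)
Your proposal is correct and follows essentially the same route as the paper: extend a planar embedding of $G$ over each $2$-cell by placing it in the bounded region enclosed by its boundary cycle, and deduce the nestedness of the resulting $2$-simplices from the nestedness of the relator cycles supplied by Theorem~\ref{thm_fundgr}. Your additional bookkeeping (indecomposable walks induce cycles, translating combinatorial non-crossing into topological nestedness, handling translates via $\Gamma$-invariance) just fills in the details the paper leaves as ``a straightforward consequence of the nestedness of~$\DF$''.
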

\begin{proof}
\medskip
Since $G$ is planar, there is an embedding $\rho': G\to \R^2$ by definition. We will extend $\rho'$ to the desired map $\rho$ from the  Cayley complex $X$ of $\Gamma(G)$ with respect to the presentation $\left<\SF\mid\RF'\right>$ from above. For this, given any 2-cell $Y$ of $X$ with boundary cycle $C$, we embed $Y$ in the finite component of $\R^2 \sm C$. It is a straightforward consequence of the nestedness of~$\DF$ that the resulting map $\rho$ has the desired property.
\end{proof}

\section{\Plpr s yield planar \Cg s: the consistent case} \label{secplty}

In this section we 
prove that every \splpr\ ---as defined in \Sr{secPlpr}--- defines a planar \Cg\ with a consistent embedding (Theorem~\ref{thmplanar}). This proof contains the fundamental arguments of this paper.
 
\subsection{Fundamental domains}

Let again $\PF= \left<\SF \mid \RF\right>$ be a presentation, and $G:=Cay(\PF)$ its \Cg. Let $T_\SF= Cay\left<\SF \mid \emptyset\right>$ be the corresponding free tree. Let $N(\RF)$ denote the normal closure of $\RF$ in the group of $\left<\SF \mid \emptyset\right>$, and note that $N(\RF)$ acts by automorphisms on $T_\SF$. Then $G$ is, almost by definition, the quotient  $T_\SF/ N(\RF)$ with respect to that action.

In this section we consider all graphs as 1-complexes. The following lemma is folklore; we include a proof for the convenience of the reader.

\begin{lem}\label{FD}
$T_\SF$ admits a connected fundamental domain for the action of $N(\RF)$.
\end{lem}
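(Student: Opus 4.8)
The plan is to exploit the fact that $T_\SF$ is the \Cg\ of the free group $F_\SF$ on $\SF$, hence a tree on which $F_\SF$, and in particular the subgroup $N(\RF)$, acts freely and without inversions by left multiplication. Consequently the quotient map $\pi\colon T_\SF \to G = T_\SF/N(\RF)$ is a covering of graphs: it restricts to a bijection between the set of edges incident with any vertex $\widetilde v$ of $T_\SF$ and the set of edges incident with $\pi(\widetilde v)$ in $G$, and its fibres are precisely the $N(\RF)$-orbits. By a \emph{connected fundamental domain} I mean a connected subtree $D \subseteq T_\SF$ meeting each $N(\RF)$-orbit in exactly one vertex; equivalently, $\pi$ maps $V(D)$ bijectively onto $V(G)$. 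So it suffices to produce such a $D$, and the natural candidate is a lift of a spanning tree of $G$.

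Concretely, fix $o:=1 \in V(T_\SF)$ and let $S$ be a spanning tree of the connected graph $G$, rooted at $\pi(o)$. For each vertex $v$ of $G$ there is a unique path in $S$ from $\pi(o)$ to $v$; reading its edge-colours together with their orientations yields a word $w_v$ over $\SF\cup\SF^{-1}$, and starting at $o$ and following $w_v$ in $T_\SF$ produces a well-defined vertex $\widetilde v$, since at every vertex of $T_\SF$ the colour-and-direction datum determines the incident edge uniquely. Let $D$ be the union over all $v$ of the paths $o w_v$ so obtained. Because $S$ is a tree, whenever $u$ lies on the path in $S$ from $\pi(o)$ to $v$ the word $w_u$ is an initial segment of $w_v$, so these lifted paths agree on their common initial segments; hence they fit together into a single subtree $D$, which is connected and satisfies $\pi(\widetilde v)=v$.

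It remains to check that $\pi$ restricts to a bijection $V(D)\to V(G)$. Surjectivity is immediate since $S$ spans $G$, and injectivity follows because $v\mapsto \widetilde v$ is a section of $\pi$ on $V(D)$: distinct vertices $v\neq v'$ have $\pi(\widetilde v)=v\neq v'=\pi(\widetilde{v'})$. As the fibres of $\pi$ are exactly the $N(\RF)$-orbits, $D$ meets each orbit in precisely one vertex, so $D$ is the desired connected fundamental domain. The only point requiring care---and the crux of the argument---is the claim that $\pi$ is a covering, i.e.\ that $N(\RF)$ acts freely and without inversions; this is exactly where freeness of $F_\SF$ enters (in $F_\SF$ one has $s^2\neq 1$, ruling out both fixed points and edge-inversions), and it is what makes the path-lifting unique and the lifts $\widetilde v$ mutually consistent across the different target vertices $v$.
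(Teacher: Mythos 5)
Your proof is correct, but it takes a genuinely different route from the paper's. The paper applies Zorn's lemma to obtain a maximal connected subgraph $D$ of $T_\SF$ meeting each $N(\RF)$-orbit at most once, and then shows that maximality forces $D$ to meet every orbit: a missed vertex orbit would have a representative adjacent to a vertex whose orbit is represented, and translating that representative next to $D$ would contradict maximality. You instead observe that $N(\RF)\le F_\SF$ acts freely and without inversions on the tree $T_\SF$, so that $\pi\colon T_\SF\to G$ is a covering, and you explicitly lift a spanning tree of $G$ through $\pi$ using unique path lifting. The two arguments produce essentially the same object---any connected transversal of the vertex orbits is a lift of a spanning tree---but yours is constructive and makes the covering structure explicit, whereas the paper's is shorter and avoids setting up the covering-space language. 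Your verifications (that the words $w_v$ read along tree paths are reduced, that the lifts cohere because of the prefix property, and that $s^2\neq 1$ in $F_\SF$ rules out fixed points and edge inversions) are all sound. One small caveat, which applies equally to the paper's own proof: since graphs are treated as 1-complexes in this section, a fundamental domain should also represent the orbits of interior points of edges, and a lifted spanning tree misses the non-tree edges entirely. The paper repairs this immediately after the lemma by replacing $D$ with a union of stars (see \eqref{stars}), and the same repair applies verbatim to your $D$, so this is a matter of bookkeeping rather than a gap.
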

\begin{proof}
Let $D$ be a maximal subgraph of $T_\SF$ that is connected and meets each $N(\RF)$-orbit in at most one point; such a $D$ exists by Zorn's lemma. We claim that $D$ meets every $N(\RF)$-orbit. For if not, then there exist two adjacent vertices $x,xs$ in $T_\SF$ (where $s\in \SF$) such that $xs$ does not belong to any orbit represented by $D$ but $x$ does. Let $x'= Rx$, where $R \in N(\RF)$, be the vertex of $D$ that lies in the same orbit as $x$. Then the vertex $Rxs$ is connected to $x'$, and its orbit is not represented by $D$. This contradicts the maximality of $D$, since $D \cup Rxs$ is connected.
\end{proof}

An {\em open star} is a subspace of a graph consisting of a single vertex and all open half-edges incident with it. A {\em star} is the union of an open star with some of the midpoints in its closure.

For the connected fundamental domain $D$ provided by \Lr{FD}, we may assume without loss of generality that 
\labtequ{stars}{$D$ is a union of stars,} 
since the action of~$N(\RF)$ never identifies two points in the same star.

\subsection{Proof of planarity} \label{secpltyproof}

In this section we prove 
\begin{thm} \label{thmplanar}
If $(\PF, \sigma, \tau)$ is a special \plpr\ with countably many generators and relators, then its \Cg\ $Cay(\PF)$ is planar. Moreover, it admits a consistent embedding, with spin $\sigma$ and spin-behaviour of generators given by $\tau$.
\end{thm}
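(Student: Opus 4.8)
The plan is to construct the embedding of $G:=Cay(\PF)$ directly from the combinatorial data $\sig,\tau$ and to read off its planarity from the non-crossing hypothesis (sP\ref{plpri}). First I would invoke \Lr{nosubwords} to assume that no relator is a proper subword of a rotation of another. The starting point is the plane tree $\ttt=Cay\left<\SF\mid s^2,s\in\IF\right>$ together with its \vapf\ embedding $\rho$ complying with $\sig,\tau$, constructed just before \Dr{defsplpr}, and the quotient map $\pi\colon\ttt\to G$ induced by $N(\RF)$, under which $G$ is the quotient of the tree by this action.

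Next I would fix the spins of the vertices of~$G$. Assigning the spin $\sig$ to a base vertex and propagating along edges according to~$\tau$ (flipping the spin across each \srev\ edge) defines a spin at every vertex, provided the result is independent of the chosen path. Independence amounts to the number of \srev\ edges being even along every closed walk of~$G$; since by \Lr{relcc} the closed walks induced by relators generate $H_1(G)$, it suffices that each relator contains an even number of \srev\ letters, which is exactly (sP\ref{plprii}). This yields a well-defined rotation system on~$G$ in which every vertex carries $\sig$ \utr\ and every generator is \spre\ or \srev\ according to~$\tau$, so that any embedding realising this rotation system is automatically \cns\ with the prescribed data.

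The core of the argument is to realise this rotation system by an actual embedding in $\real^2$, and here the non-crossing hypothesis enters. I would build the drawing from a connected fundamental domain $D$ for the action of $N(\RF)$, furnished by \Lr{FD}, which by~\eqref{stars} is a union of stars meeting each orbit exactly once, so that $\pi$ identifies $V(D)$ with $V(G)$. Placing each vertex of~$G$ at the $\rho$-image of its representative in~$D$ embeds the edges lying inside~$D$ as in~$\ttt$; the remaining edges of~$G$ are the ``return'' edges joining a star of~$D$ to a vertex whose representative lies in a different star, and these are the edges that must be inserted without creating crossings. The key claim is that two of the relator-induced cycles $gR$ (which, by \Lr{relcc}, generate the cycle space) can cross in the proposed drawing only if the corresponding double rays $W^\infty,Z^\infty$ cross in~$\ttt$ in the sense of the \empr; the non-crossing of relators (sP\ref{plpri}), together with its invariance under translation by $N(\RF)$, then forbids any crossing.

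I expect the main obstacle to be precisely this last step: translating the purely combinatorial non-crossing of relators inside the tree~$\ttt$ into the absence of crossings in the quotient~$G$. The difficulty is that the identifications made by $N(\RF)$ could a priori fold the nicely embedded tree onto itself so as to produce a crossing that is invisible at the level of any single relator; one must show that every putative crossing in~$G$ is witnessed by a crossing of two relator double rays in~$\ttt$, contradicting (sP\ref{plpri}). Finally, since there are only countably many generators and relators, I would obtain the global embedding as a limit over a star-by-star exhaustion of~$D$, keeping the partial drawing crossing-free at each stage, and use the accumulation-freeness of~$\rho$ together with local finiteness to ensure that the limit is a genuine embedding; its consistency with $\sig,\tau$ is then immediate from the construction.
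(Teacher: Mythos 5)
Your overall strategy coincides with the paper's: embed the tree $\ttt$ respecting $\sig,\tau$, take the connected fundamental domain $D$ of \Lr{FD} (a union of stars), observe that $G$ is obtained from $\cls{D}$ by identifying $N(\RF)$-equivalent boundary points, and reduce planarity to showing that these identified pairs are nested. Your use of (sP2) together with \Lr{relcc} to get a well-defined spin at every vertex is likewise exactly the paper's observation \eqref{rspre}, and your limit/exhaustion handling of countability is an acceptable variant of the paper's compactness reduction to finite presentations.

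However, there is a genuine gap at the step you yourself flag as ``the main obstacle'', and it is not a routine one. The pairs of identified points of $\cls{D}$ correspond to \emph{arbitrary} cycles of $G$, not to relator-induced cycles, so your key claim about the cycles $gR$ does not apply to them directly; and knowing that relator cycles generate the cycle space does not by itself transfer non-crossing to homological sums, since crossing numbers are not additive under such sums in any obvious way. The paper closes this gap with a substantial construction: for each relator walk $W_o$ it defines a parity bipartition $\{I,O\}$ of the faces of $\ttt$ (even versus odd number of crossings of $\ttt[W_o]$ by a path in the dual tree), proves that this bipartition is $N(\RF)$-invariant (\Lr{Ninv}, itself a delicate pairing argument over several types of crossings that uses both (sP1) and the no-subword reduction of \Lr{nosubwords}), pushes it down to a bipartition $B_W$ of the 2-cells of $\Gc$, and then --- this is the step your sketch has no substitute for --- defines the bipartition attached to an arbitrary cycle $C=\sum_i W_i$ as the symmetric difference $B_C:=\sydi_i I_{W_i}$, verifies via \Lr{We} and \Lr{faces} that $B_C$ separates the two 2-cells of an edge exactly when that edge lies on $C$, and deduces in \Lr{crossep} that any two cycles of $G$ cross an even number of times; nestedness of the identified pairs then follows because two non-nested pairs would yield cycles crossing exactly once in the tree $\cls{D}$. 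Note also that the statement you propose to prove (``every putative crossing in $G$ is witnessed by a crossing of two relator double rays in $\ttt$'') is not what ends up being true: a single crossing in $G$ need not be witnessed by a single crossing pair of relator rays, and only the parity statement survives. As written, the proposal identifies the correct target but supplies no mechanism for reaching it, so the heart of the theorem is missing.
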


For the rest of this section, let us fix $\PF=\left<\SF \mid \RF\right>$ as above, and let $G:= Cay(\PF)$.
Recall the definition of the embedded tree \ttt\ from \Sr{secPlpr}, i.\,e.\ $\ttt=Cay\left<\SF\mid s^2, s\in\IF\right>$.

Note that it suffices to prove the statement for a finite presentation $\PF$; the countably infinite case can then be deduced as follows. If \g does not admit a consistent embedding, then by a standard compactness argument  there is a finite subgraph $H \subset G$ that does not admit a consistent embedding. It is an easy exercise to show that for some finite $\SF'\subset \SF$ and $\RF'\subset \RF$, the \Cg\ $Cay(\left<\SF',\RF'\right>)$ also contains $H$ as a subgraph. 
But $\left<\SF',\RF'\right>$ is a finite presentation which is planar with respect to the restriction of $\sigma, \tau$ to $\SF',\RF'$, leading to a contradiction.

\medskip

Let $D$ be a connected fundamental domain of $\ttt$ with respect to the action of $N(\RF)$, provided by \Lr{FD}. Recall that we may assume that $D$ is a union of stars. Thus the closure $\cls{D}$ of $D$ in $\ttt$ is the union of $D$ with all midpoints of edges that have exactly one half-edge in $D$. Moreover, $G$ can be obtained from $\cls{D}$ by identifying pairs of such midpoints: each midpoint $m$ in $\cls{D}\sm D$ is identified with the unique midpoint $m'$ in $D$ that is {\em $N(\RF)$-equivalent} to $m$, where we call two points or subsets $X,Y$ of \ttt\  $N(\RF)$-equivalent when they lie in the same orbit of the action of $N(\RF)$ on~\ttt. 
Note that $m'$ might coincide with $m$, which is the case exactly when it lies on an edge coloured by a generator in the set $\IF$ of explicit involutions.

We claim that
\labtequ{rspre}{every two $N(\RF)$-equivalent vertices of \ttt\ have the same spin.} 
Indeed, this follows from condition (sP\ref{plprii}) of the definition of a \plpr, according to which every element of $\RF$ joins vertices with same spins.

\medskip
To show that $G$ is planar, and it even admits a consistent embedding, it will suffice to show that these pairs of identified points are {\em nested} in the embedding of $\cls{D}$ inherited from the embedding of $\ttt$. Here, we say that two pairs of midpoints  $x,x'$ and $y,y'$ in $\cls{D}\sm D$ are nested, if the $x$--$x'$ path in $\cls{D}$ does not cross the $y$--$y'$ path, where we define \emph{crossing} similarly to \Sr{secDem}. 

Assuming that such pairs of points are nested, it is easy to 
 prove that \g is planar: note that we can cut a closed domain $D'$ of $\R^2$ homeomorphic to a closed disc such that $D'\cap \ttt=\cls{D}$. Let $D''$ be a homeomorphic copy of $D'$, and glue $D'$ to $D''$ by identifying all pairs of corresponding points of their boundaries to obtain a homeomorph $S$ of the sphere. For every pair $x,x'$ of $N(\RF)$-equivalent points of $\cls{D}$, let $X$ be the $x$--$x'$ path in $\cls{D}$ and let $X''$ be its copy in $D''$. Nestedness implies by definition that these arcs $X''$ can be continuously deformed into  pairwise disjoint arcs. Therefore, the union of $\ttt\cap \cls{D}$ with all these arcs is an embedding of \g on the sphere $S$ (where every midpoint of an edge in $\cls{D}$ became a closed arc). This embedding is consistent because the embedding of \ttt\ we started with is. 

\medskip
It thus only remains to prove this nestedness. Our intuition for this is that when both the $x$--$x'$ path and the $y$--$y'$ path from above are induced by relators, then these paths cannot cross since that would imply that the corresponding relators cross, which is forbidden. In the next section we will extend this idea to arbitrary pairs of such points, using the fact that the aforementioned paths are cycles of \g and cycles of \g can be `proved' using relators.

\subsection{Nestedness in $\cls{D}$} \label{secnest}
Let $\pi$ denote the canonical covering map from \ttt\ to \G, and let $o_G:=\pi(o_\ttt)$ denote the identity element of \G.
Given a  relator $W$, let $W_o$ denote the closed walk $o_G W$ in \g induced by $W$ at $o_G$. Let $\ttt_W:=\pi^{-1}(W_o)$, and note that $\ttt_W$ is a union of a set of double-rays of $\ttt$, which set we denote by $\ttt[W_o]$,
and along each such double-ray we can read the 2-way infinite word $W^\infty$ obtained by repeating $W$ indefinitely.
Indeed, the only case that could prevent $W^\infty$ from 
spanning a double-ray is when $W=b$ for some $b$ for which $b^2$ is a relator, but then $W$ would be a subword of the relator $b^2$.
However, applying \Lr{nosubwords} we may assume that this is not the case. 
Equivalently, $\ttt_W$ is the union of all double rays $N(\RF)$-equivalent with the double-ray induced by $W$ at $o_\ttt$. 

We start our proof of nestedness by noting that in a plane graph, every cycle $C$ separates the graph into two (possibly empty) sides $I,O$ by the Jordan curve theorem, with no edges of the graph joining $I$ to $O$. Although we have not yet embedded $G$ in the plane, we will be able to show that cycles, or closed walks, of \g that are induced by relators enjoy a similar property by exploiting the embedding of \ttt\ and the non-crossing property of relators.

\subsubsection{Bipartitioning the faces of \ttt} \label{subBipT}

The {\em dual} graph $\ttt^*$ of \ttt\ is the graph whose vertex set is the set of faces of \ttt, and two faces of \ttt\ are joined with an edge $e^*$ of $\ttt^*$ whenever their boundaries share an edge $e$ of $\ttt$.
Given two faces $F,H$ of \ttt, and an $F$--$H$~path $P_{FH}$ in $\ttt^*$, we let $Cr(\ttt[W_o],P_{FH})$ denote the {\em number of crossings} of $\ttt[W_o]$ by $P_{FH}$; to make this more precise, for a double-ray $T$ in $\ttt[W_o]$, we write $cr(T, P_{FH})$ for the number of edges $e$ in $T$ such that $P_{FH}$ contains $e^*$, and we let  $Cr(\ttt[W_o],P_{FH}):= \sum_{T\in \ttt[W_o]} cr(T,P_{FH})$. We claim that
\labtequ{Cr}{for every two faces $F,H$ of \ttt, the parity of the number of crossings $Cr(\ttt[W_o],P_{FH})$ is independent of the choice of the path $P_{FH}$.}
To see this, note that if $C$ is a cycle in $\ttt^*$, then $Cr(\ttt[W_o],C)$ ---defined similarly to $Cr(\ttt[W_o],P_{FH})$--- is even because the embedding of \ttt\ is \vapf\ and so any ray entering the bounded side of $C$ has to exit it again. This immediately implies~\eqref{Cr}.

This fact allows us to introduce the following definition
\begin{defi}
Given two faces $F,H$ of \ttt, we write $F\sim H$ if some, and hence every, $F$--$H$~path $P_{FH}$ in $\ttt^*$ crosses $\ttt_W$ an even number of times, i.e.\ if $Cr(\ttt[W_o],P_{FH})$ is even.
\end{defi}

Note that $\sim$ is an equivalence relation of the set of faces $\FF$ of \ttt. Moreover, it uniquely determines an (unordered) bipartition $\{I,O\}$ on $\FF$ by choosing one face $F$ and letting $I:=\{H\in \FF \mid H \sim F\}$ and $O:= \FF \sm I$. Note that $\ttt_W$ decomposes $\R^2$ into regions each of which is in a single side of this bipartition, and crossing $\ttt_W$ corresponds to alternating between $I$ and $O$. It will turn out, after we prove that \g is planar, that $I,O$ are the lifts of the two sides of the closed walk $W_o$ of \G. The following lemma shows that this is plausible and will be needed later.

\begin{lem} \label{Ninv}
The relation $\sim$ is invariant under the action of $N(\RF)$ on~\ttt. 
\end{lem}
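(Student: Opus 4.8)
The plan is to show that the equivalence relation $\sim$ on the faces of \ttt\ is preserved when we apply any automorphism $g \in N(\RF)$. The key structural fact to exploit is that $\ttt_W$ is \emph{$N(\RF)$-invariant as a set}: by definition $\ttt_W$ is the union of all double rays $N(\RF)$-equivalent to the double ray induced by $W$ at $o_\ttt$, so $g(\ttt_W) = \ttt_W$ for every $g \in N(\RF)$. Since an automorphism of \ttt\ induces an automorphism of the dual graph $\ttt^*$ carrying the dual edges of $\ttt_W$ to dual edges of $\ttt_W$, the natural guess is that crossing-parities are simply transported.

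Concretely, fix $g \in N(\RF)$ and two faces $F, H$ of \ttt\ with $F \sim H$; I want to show $g(F) \sim g(H)$. First I would take any $F$--$H$ path $P_{FH}$ in $\ttt^*$ witnessing $F \sim H$, i.e.\ with $Cr(\ttt[W_o], P_{FH})$ even. Because $g$ is a graph automorphism of \ttt, it acts on the faces and hence on $\ttt^*$, so $g(P_{FH})$ is a $g(F)$--$g(H)$ path in $\ttt^*$. The crossings of $g(P_{FH})$ with $\ttt[W_o]$ are exactly the images under $g$ of the crossings of $P_{FH}$ with $\ttt[W_o]$: an edge $e$ of some double ray $T \in \ttt[W_o]$ is crossed by $P_{FH}$ precisely when $g(e)$, an edge of the double ray $g(T) \in \ttt[W_o]$ (using $g(\ttt_W) = \ttt_W$), is crossed by $g(P_{FH})$. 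This is a parity-preserving bijection between the two crossing sets, so $Cr(\ttt[W_o], g(P_{FH}))$ has the same parity as $Cr(\ttt[W_o], P_{FH})$, namely even. By the well-definedness statement~\eqref{Cr}, the parity of $Cr$ does not depend on the chosen path, so this establishes $g(F) \sim g(H)$.

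The main point requiring care --- and the only place one might stumble --- is verifying that $g$ genuinely maps $\ttt[W_o]$ to itself \emph{as a set of double rays}, not merely that it permutes edges of $\ttt_W$. Here I would lean on the characterisation that $\ttt[W_o]$ is the set of all double rays of \ttt\ along which one reads $W^\infty$, and that these are exactly the $N(\RF)$-translates of the double ray $oW^\infty$; since $N(\RF)$ is a group, left multiplication by $g$ permutes these translates among themselves. This is precisely the ``Equivalently\dots'' description of $\ttt_W$ given just before \Sr{subBipT}. With that in hand, the bijection on crossings is immediate and the parity argument closes. Since $F \sim H$ iff $g(F) \sim g(H)$ for all $g \in N(\RF)$ (the reverse implication following by applying the same argument to $g^{-1}$), the relation $\sim$, and hence the induced bipartition $\{I, O\}$, is $N(\RF)$-invariant, completing the proof.
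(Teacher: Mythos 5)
Your argument establishes only that $F\sim H$ implies $g(F)\sim g(H)$ for $g\in N(\RF)$, i.e.\ that each deck transformation permutes the two equivalence classes of $\sim$. That much is essentially automatic (the whole configuration is equivariant, exactly as you observe), but it is not what the lemma has to deliver. What the paper proves --- and what is used immediately afterwards to push $\{I,O\}$ down to a bipartition $B_Z$ of the $2$-cells $\cells$ of $\Gc$ --- is the stronger statement that any two faces $F,H$ lying \emph{in the same $N(\RF)$-orbit} satisfy $F\sim H$; equivalently, $F\sim g(F)$ for every $g\in N(\RF)$. Since $\sim$ has exactly two classes, your version leaves open the possibility that some $g$ \emph{swaps} $I$ and $O$, in which case $\pi[I]$ and $\pi[O]$ would overlap and the induced bipartition of $\cells$ would not be well defined. (The stronger statement implies yours by transitivity of $\sim$, but not conversely, so you have proved strictly less than is needed.)

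The missing content is precisely the hard part of the paper's proof: one reduces to the case $y=xwRw^{-1}$ for a single relator $R\in\RF$, chooses a concrete $F$--$H$ path $P_{FH}$ in $\ttt^*$ running alongside the walk $xwRw^{-1}$, and shows that $Cr(\ttt[W_o],P_{FH})$ is even by pairing up crossings: crossings with both end-edges on $R$, or with one end-edge in $w$ and one in $w^{-1}$, are ruled out using (sP\ref{plpri}) and the assumption that no relator is a subword of another, and the remaining crossings are matched in pairs via the translation carrying $w$ to $w^{-1}$, with (sP\ref{plprii}) controlling the spins. A telltale sign that your proof cannot be complete is that it never invokes any of the defining conditions of a \splpr; the evenness of the crossing count along $xwRw^{-1}$ genuinely depends on these hypotheses (for instance, a relator $R$ crossing $W$ exactly once would already make the count odd), so no purely equivariance-based argument can suffice.
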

\begin{proof}

It suffices to prove that if $F,H$ are faces in the same orbit of $N(\RF)$, then $F\sim H$. We may assume that there are vertices $x,y$ in the boundaries of $F,H$ respectively, such that $y= x w R w^{-1}$ for some word $w$ and some relator $R\in \RF$: by the definition of the normal closure $N(\RF)$, if we can prove $F\sim H$ in this case, we can prove $F\sim H$ for every two  $F,H$ in the same orbit of $N(\RF)$.

Since we are free to choose any $F$--$H$~path  in $\ttt^*$ by \eqref{Cr}, let us choose $P_{FH}$ to be one that starts with an edge $e^*$ where $e$ is incident with both $x$ and $F$, finishes with an edge $f^*$ where $f$ is incident with both $y$ and $H$, and does not cross the walk from $x$ induced by $w R w^{-1}$ (\fig{wRw}). We need to check that $Cr(\ttt[W_o],P_{FH})$ is even.

\begin{figure}[htbp]
\centering \begin{overpic}[width=.6\linewidth]{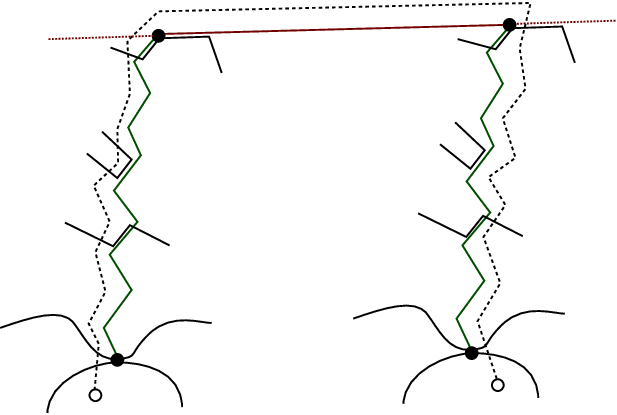} 
\put(24,35){$w$}
\put(66,35){$w^{-1}$}
\put(50,70){$P_{FH}$}
\put(50,56){$R$}
\put(18,1){$F$}
\put(73,2){$H$}
\put(9,9){$x$}
\put(67,10){$y$}
\put(25,64){$x'$}
\put(82,67){$y'$}

\end{overpic}

   \caption{\small The path $P_{FH}$ (dashed) in the proof of \Lr{Ninv} and some elements of $\ttt[W_o]$ it crosses.} \label{wRw}

\end{figure}

For this, we only need to consider those double-rays $T\in \ttt[W_o]$ for which $cr(T,P_{FH})$ is odd. We will group those $T$ into pairs, showing that their total contribution is even as desired. For simplicity, we will tacitly assume that  $x w R w^{-1}$ crosses any $T$ just once; the general case can be handled with the same arguments. 

By elementary topological arguments, any such $T$ is either crossed by our walk $x w R w^{-1}$, or it visits $x$ or $y$ and its two rays are separated by\\  $F \cup  x w R w^{-1} \cup H$. We will call the former case a crossing of type A, and the latter a crossing of type B, and we will separately show that crossings of each type come in pairs.

Recall the definition of a crossing from \Sr{secDem}.
For type A, we will distinguish the following sub-types of crossings $Q= e\kreis{Q}f\subseteq w R w^{-1}$ of such a $T$ by $w R w^{-1}$:
\begin{enumerate}
\item \label{ci} both $e,f$ lie in $R$;
\item \label{cip} one of $e,f$ lies in $w$ and the other in $w^{-1}$;
\item \label{cii} both $e,f$ lie in $w$ or they both lie in $w^{-1}$; or
\item \label{ciii} exactly one of $e,f$ lies in $R$ (and the other in either $w$ or $w^{-1}$).
\end{enumerate}
Type \ref{ci} cannot occur, as it would imply that the relator $R$ crosses with the relator of $T$, and we are assuming $\PF$ to be a \plpr. The second one is also impossible, as it would imply that $R$ is contained in a double-ray in $\ttt[W_o]$, which would in turn imply that one of $T,R$ is a subword of the other, and we are forbidding this too in a \plpr.

We will define a bijection between those crossings $Q$ that have an end-edge in $w$ and those that have an end-edge in $w^{-1}$, showing that the total number of crossings is even as desired.

For crossings of type \ref{cii}, it is easy to bijectively map each crossing $Q$ with $e,f$ in $w$ to a crossing $Q'$ with $e,f$ in $w^{-1}$: the automorphism of \ttt\ mapping the end-vertex $x'$ of $w$ to the starting vertex $y'$ of $w^{-1}$ translates $Q$ to a crossing $Q'$ as desired, as it translates $w$ to $w^{-1}$ and the element of $\ttt[W_o]$ crossed by $w$ to another element of $\ttt[W_o]$.

For crossings of type \ref{ciii} a similar argument applies, but we have to be more careful. Again, given a crossing $Q$ of an element $T$ of $\ttt[W_o]$ by $w R w^{-1}$ with $e$, say, in $w$ and $f$ in $R$, we translate it to a walk $Q'$ by the automorphism of \ttt\ mapping $x'$ to $y'$. We claim that $Q'$ is a crossing of the translate $T'$ of $T$ by $w R w^{-1}$. This fact is easier to see in \fig{wRw} than to explain with words, and it follows from the following three facts: a) the double-ray $R^\infty$ obtained by reading the word $R$ indefinitely starting from $x'$ does not cross $T$ by the definition of a \plpr; b) $T'$ does not contain all of $R$, since we are assuming that no relator is a sub-word of another relator, and c) $x'$ and $y'$ have the same spin since they are joined by a relator path $R$.

\medskip
It remains to consider crossings of type B, i.e.\ where $T$ visits $x$ or $y$ and its two rays are separated by $F \cup  x w R w^{-1} \cup H$. But for any such $T$, the automorphism of \ttt\ mapping $x$ to $y$ or the other way round maps $T$ to another element of  $\ttt[W_o]$ that is crossed by $x w R w^{-1}$ as often as $T$ is, therefore such crossings appear in pairs as well.

\medskip
Thus we have paired up all crossings, showing that $Cr(\ttt[W_o],P_{FH})$ is even.
\end{proof}

A further important property of our bipartition is that
\labtequ{FeH}{for every edge $e$ of \ttt, the two faces $F,H$ of $e$ lie in distinct elements of $\{{I},{O}\}$ if and only if $e\in \ttt_W$ and $e$ lies in an odd number of elements of $\ttt[W_o]$.}
Indeed, in this case the edge $e^*$ can be chosen as the $F-H$~path $P_{FH}$ in the definition of $\sim$, and $Cr(\ttt[W_o],P_{FH})$ is just the  number of elements of $\ttt[W_o]$ containing $e$.

\begin{rem}
We can define an equivalence relation on the vertices of \ttt\ similar to our $\sim$ for faces: write $x\sim y$ if the (unique) $x$--$y$~path $P_{xy}$ in \ttt\ crosses $\ttt[W_o]$ an even number of times. 
Results similar to \Lr{Ninv} and \eqref{FeH} extend to this relation, but it is more convenient to work with faces.
\end{rem}

\subsubsection{Bipartitioning the `faces' of \G} \label{subBipG}
\newcommand{\are}{\vec{e}}
\newcommand{\arE}{\vec{E}}

We would like to use the $N(\RF)$-invariance of the bipartition of $\FF$ we defined above (\Lr{Ninv}) to induce a bipartition on faces of \G, but we cannot talk about faces of \g before proving that it is planar. However, there is a way around this: for every face  $F$ of \ttt, glue a copy of the domain $\overline{F}\subset \R^2$ to \g by identifying each point $x$ of $\partial F$ with $\pi(x)$, where $\pi$ still denotes our covering map from \ttt. 
If $F,F'$ are equivalent face boundaries, in other words, if $\pi(\partial F)=\pi(\partial F')$, then we identify the corresponding 2-cells glued onto \G. These identifications ensure that every edge of $G$ is in the boundary of either exactly one or exactly two 2-cells (but might appear in the boundary of a 2-cell several times); $e$ is in the boundary of only  one 2-cell exactly when the two faces of any lift of $e$ to \ttt\ are $N(\RF)$-equivalent. Indeed, the 2-cells we introduced are bounded by walks corresponding to facial 2-way infinite words, and every edge is in exactly two such words. Moreover, the lifts of $e$ map those walks to exactly the boundaries of pairs of incident faces of \ttt, and such pairs are $N(\RF)$-equivalent for the various decks of the covering.
 
Let $\cells$ denote the set of these 2-cells, and let $\Gc=\G \cup \cells$ denote the 2-complex consisting of \g and these 2-cells.

\Lr{Ninv} now means that if $Z$ is a closed walk of \g induced by a relator, then $\{{I},{O}\}$ induces a bipartition $\pi[I],\pi[O]$ of $\cells$. 
Let us from now on denote this bipartition of $\cells$ by {\em $B_Z$}.
\medskip

Our next aim is to extend our construction of that bipartition to an arbitrary cycle of \G, showing that every cycle has two `sides'. 

To achieve this, given a cycle $C$ of $G$, we choose a `proof' $P$ of $C$; that is, a sequence of closed walks  $W_i, 1\leq i \leq k$, of \g induced by rotations of relators such that $C=\sum_{1\leq i\leq k} W_i$, where $\sum$ denotes addition in 
the simplicial homology sense. Such a sequence $(W_i)$ exists by \Lr{relcc}. For every $W_i$, let $I_{W_i},O_{W_i}$ denote the two sides of the bipartition $B_{W_i}$ of $\cells$ from above; it will not matter which element of $B_{W_i}$ we denote by $I_{W_i}$ and which by $O_{W_i}$, so we may make an arbitrary choice here. 

Define the bipartition $B_C(P):=\{I_C,O_C\}$ of the 2-cells $\cells$ of \Gc\ by setting $I_C:= \sydi_i I_{W_i}$ and $O_C:= \cells \sydi I_C$, where $\sydi$ denotes the symmetric difference. Note that 
$O_C= \sydi_i O_{W_i}$ when $k$ is odd and $O_C= \cells \sydi \left(\sydi_i O_{W_i}\right)$ when $k$ is even, where $k$ is the number of our $W_i$.

Once we have constructed a planar embedding of \G\ in the plane, it will turn out that $B_C(P)$ corresponds to the bipartition of the faces of \g into those lying inside/outside $C$.\footnote{It should be said however, that faces of \g do not correspond one-to-one to 2-cells of $\Gc$; see~\cite{planarPresII}.} To see why this might be true, consider the situation of \fig{BC} as an example; we imagine $I_C$ to be the set of 2-cells inside $C$ and $O_C$ the set of 2-cells outside it, or the other way round. \fig{BC} is only a help to our imagination since we have not yet proved \g to be planar. Our definition of $B_C(P)$ builds on this idea, but has to deal with the fact that we do not yet have an embedding of \G\ in the plane. 

\begin{figure}[htbp]
\centering \begin{overpic}[width=.4\linewidth]{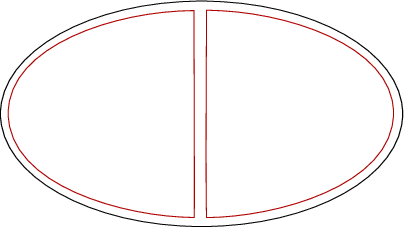} 
\put(30,43){$W_1$}
\put(57,43){$W_2$}
\put(9,53){$C$}
\end{overpic}

   \caption{\small Intuition of the definition of $B_C(P)$.} \label{BC}

\end{figure}

The reader who finds this definition surprising might be comforted to know that this is the subtlest idea of the proof of \Tr{thmplanar}, and it took us a long time to come up with. An important point, which explains it to some extent, is that $B_C(P)$ is independent of our above choices of which side to denote by $O_{W_i}$ and which by $I_{W_i}$, since a bipartition is an undirected pair of sets (the example of \fig{BC} might be helpful again). What is more important is that, as we will see below, $B_C(P)$ is independent of the choice of the proof $P$. From all we know at the moment however, it is defined as a function of the proof $P$, so let us denote it by $B_C(P)$.

Our next aim is to show that, in a certain way, $B_C(P)$ behaves like the bipartition of the faces of a plane graph induced by a cycle $C$: to move between the two sides, one has to cross an edge of $C$. This is achieved by \Lr{faces} below, for the proof of which we need the following. 

\begin{lem} \label{We}
Let $e$ be a directed edge of \G, let $W\in \RF$ be a relator which is not of the form $b^2=1$ for $b\in \SF$, and let $o_G W$ be the closed walk of \g rooted at $o_G$ induced by $W$. Then the number of double-rays in $\ttt[W_o]$ containing $e$ equals the number of times that $o_G W$ traverses $\pi(e)$. 
\end{lem}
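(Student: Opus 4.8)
The plan is to read off both quantities from the covering map $\pi\colon\ttt\to\G$. Recall that $\G=\ttt/N(\RF)$, that $N(\RF)$ acts freely on~$\ttt$ by left multiplication (so that $\pi$ is a regular covering), and that, by construction, $\ttt[W_o]$ is exactly the $N(\RF)$-orbit of the single double-ray $D_0:=oW^\infty$, which reads $W^\infty$ starting at~$o$ and is a genuine double-ray since $W\neq b^2$ and by \Lr{nosubwords}. Fix a lift in~$\ttt$ of the given directed edge, still denoted~$e$; by the $N(\RF)$-invariance of $\ttt[W_o]$ the count on the left does not depend on this choice. Writing $W=w_1\cdots w_L$, the closed walk $W_o$ traverses the directed edges $f_1,\dots,f_L$ in order, and the right-hand side is $\#\{\,j\in\{1,\dots,L\}: f_j=\pi(e)\,\}$.

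First I would set up the orbit--stabiliser bijection. A double-ray $T=\gamma D_0$ (with $\gamma\in N(\RF)$) contains~$e$ if and only if $\gamma^{-1}e$ is a directed edge of~$D_0$; since $\pi(\gamma^{-1}e)=\pi(e)$, that edge is the lift of some~$f_j$ sitting at a position $\equiv j\pmod L$ of~$D_0$, for a (not necessarily unique) slot~$j$ with $f_j=\pi(e)$. This assigns to each such~$T$ a residue $j\bmod L$, and conversely each slot~$j$ with $f_j=\pi(e)$ gives, by unique path-lifting, a double-ray through~$e$. The relator $W$ itself lies in $N(\RF)$ and acts as the shift of~$D_0$ by one full period, carrying a representative of~$j$ to one of~$j+L$; this is what makes the assignment $T\mapsto j\bmod L$ well defined. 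Thus the lemma is equivalent to the statement that distinct slots $j\bmod L$ (with $f_j=\pi(e)$) yield distinct double-rays, i.e.\ that no identifications beyond the period shift occur.

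The crux, and what I expect to be the main obstacle, is therefore the claim $\mathrm{Stab}_{N(\RF)}(D_0)=\langle W\rangle$, where $\mathrm{Stab}$ denotes the setwise stabiliser. Two slots $j,j'$ give the same double-ray precisely when $\gamma'^{-1}\gamma$ stabilises~$D_0$ for the corresponding deck transformations, so this equality forces $j\equiv j'\pmod L$ and hence the bijection. To prove it I would argue as follows. Any translation of the line~$D_0$ lying in~$N(\RF)$ must shift it by a multiple of~$L$: a shift by $0<s<L$ steps would send~$o$ to the vertex $w_1\cdots w_s$ of~$D_0$ while staying in $N(\RF)$, i.e.\ $\pi(w_1\cdots w_s)=o_G$, so that $W_o$ would return to~$o_G$ after $s<L$ steps and run around a strictly shorter closed walk; this contradicts the indecomposability of the relators, which (just as, via \Lr{nosubwords}, we assume no relator is a subword of another) we may assume throughout, and which is exactly what rules out degenerate relators such as $a^n$ with $a$ of order~$<n$. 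A reflection of~$D_0$ reverses its reading direction and so maps a lift of the directed edge~$\pi(e)$ to a lift pointing the opposite way; it can therefore never identify two double-rays both containing the \emph{directed} edge~$e$, and is irrelevant to the count. With $\mathrm{Stab}_{N(\RF)}(D_0)=\langle W\rangle$ in hand, the assignment above is a bijection between the double-rays in $\ttt[W_o]$ through~$e$ and the traversals of~$\pi(e)$ by~$W_o$, which is the assertion.
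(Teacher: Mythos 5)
Your argument is essentially the paper's own: both proofs reduce the statement to the same bijection, between the double-rays of $\ttt[W_o]$ containing $e$ and the directed edges in one period of the lifted walk that are $N(\RF)$-equivalent to $e$, the latter set being in obvious bijection with the traversals of $\pi(e)$ by $o_GW$. The paper fixes the lift $T$ of $o_GW^\infty$ through $e$, takes the period $Q$ of $T$ beginning at $e$, and simply asserts that translating $T$ by the deck transformation carrying $e'$ to $e$ (for $e'$ ranging over the edges of $Q$ equivalent to $e$) is a one-to-one correspondence; you package the same count as an orbit--stabiliser computation. The one place where you do more than the paper is also the one step to scrutinise: injectivity rests on your claim $\mathrm{Stab}_{N(\RF)}(D_0)=\langle W\rangle$, and you exclude shorter translations by appealing to ``indecomposability of the relators''. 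That is not a hypothesis you are entitled to in this setting: \Lr{nosubwords} only grants that no relator is a subword of a rotation of another, and Lemma~\ref{We} is invoked for an \emph{arbitrary} \splpr\ (in the direction presentation~$\Rightarrow$~planarity), not for one manufactured from an embedded graph via Theorem~\ref{thm_fundgr}, where indecomposability does hold. Concretely, if $W=U^n$ as a word and some proper power $U^m$ with $m<n$ happens to lie in $N(\RF)$ on account of the \emph{other} relators, then the stabiliser is strictly larger than $\langle W\rangle$ and the two sides of the lemma disagree (even modulo~$2$); ruling this out requires an argument from (sP\ref{plpri}), (sP\ref{plprii}) and the no-subword condition, or an explicit additional normalisation of the presentation. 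To be fair, the paper's proof asserts its one-to-one correspondence without addressing exactly this point, so your write-up is more explicit about where the real content lies; but as stated the appeal to indecomposability is a borrowed assumption rather than a proof, and your correct observation that reflections cannot contribute (they reverse the direction of the directed edge) does not repair it.
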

\begin{proof}
If $o_G W$ does not traverse $\pi(e)$ then $\ttt[W_o]$ avoids $e$ and we are done. So suppose that $o_G W$ does traverse $\pi(e)$.
Let $o_G W^\infty$ denote the two-way infinite walk on \g obtained by repeating $o_G W$ indefinitely. Let $T\in \ttt[W_o]$ be the lift of $o_G W^\infty$ to \ttt\ (via $\pi^{-1}$) sending $\pi(e)$ to $e$, and note that $T$ is a double-ray containing $e$. Let $Q$ be the subpath of $T$ that starts with $e$ and finishes when a rotation of the word $W$ is completed. By the definition of $\ttt[W_o]$, there is a 1--1 correspondence between the elements of $\ttt[W_o]$ containing $e$ and the directed edges $e'$ in $Q$ that are $N(\RF)$-equivalent to $e$: each such element of $\ttt[W_o]$ can be obtained by translating $T$ by the automorphism of \ttt\ sending $e'$ to $e$. 

Now note that $o_G W$ traverses $\pi(e)$ whenever its lift $T$ traverses one of those~$e'$. Combined with the above observations this proves our assertion.
\end{proof}

\begin{lem} \label{faces}
For every $e\in E(G)$,  the bipartition  $B_C(P)$ separates 2-cells of $e$ if and only if $e\in C$.
\end{lem}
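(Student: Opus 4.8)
The plan is to reduce the assertion to the single-relator bipartitions $B_{W_i}$, whose behaviour I can read off directly from \eqref{FeH} and \Lr{We}, and then to recombine them using that $B_C(P)$ is their symmetric difference. Throughout I write $c_1,c_2$ for the two $2$-cells of \Gc\ incident with $e$ (with $c_1=c_2$ when $e$ bounds a single $2$-cell), I fix a lift $\tilde e$ of $e$ to \ttt\ with incident faces $F,H$, so that $c_1=\pi(F)$ and $c_2=\pi(H)$, and I work over $\mathbb F_2$, recording membership in the chosen $I$-side of a bipartition by a $\{0,1\}$-indicator $[\,\cdot\,]$. I may also assume that the proof $P=(W_i)$ uses only rotations of relators that are not of the form $b^2$: an involution relator induces the closed walk that traverses its undirected edge forth and back, which is the zero $1$-chain and hence null-homologous, so by \Lr{relcc} such relators can be omitted from $P$ without changing the homology class of $C$. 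This ensures that \Lr{We} is applicable to every $W_i$.

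First I would prove the single-relator case: for each $i$, the bipartition $B_{W_i}$ separates $c_1$ from $c_2$ if and only if the closed walk $W_i$ traverses $e$ an odd number of times. Since $B_{W_i}$ is the push-forward under $\pi$ of a bipartition of the faces \FF\ of \ttt\ (this descent being exactly \Lr{Ninv}), it separates $c_1=\pi(F)$ from $c_2=\pi(H)$ precisely when $F$ and $H$ lie on opposite sides of that face-bipartition. By \eqref{FeH}, this happens if and only if $\tilde e$ lies in an odd number of the double-rays constituting $\ttt_{W_i}$. As each such double-ray is a path in the tree \ttt, it meets $\tilde e$ in at most one of its two directions, so this undirected count is the sum of the two directed lift-counts; by \Lr{We} these equal the numbers of forward and of backward traversals of $e$ by $W_i$, and their sum is the total number of traversals of $e$ by $W_i$. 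This proves the single-relator case. The degenerate case $c_1=c_2$ is consistent: then $F,H$ are $N(\RF)$-equivalent, hence on the same side of every face-bipartition by \Lr{Ninv}, so $B_{W_i}$ never separates them and, by \eqref{FeH}, every $W_i$ traverses $e$ an even number of times.

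Next I would recombine. Using $I_C=\sydi_i I_{W_i}$ we get $[c\in I_C]\equiv\sum_i[c\in I_{W_i}]\pmod 2$ for every $2$-cell $c$, and therefore
\[
[c_1\in I_C]+[c_2\in I_C]\equiv\sum_i\big([c_1\in I_{W_i}]+[c_2\in I_{W_i}]\big)\pmod 2 .
\]
The left side is $1$ exactly when $B_C(P)$ separates $c_1,c_2$, while the $i$-th summand on the right is $1$ exactly when $B_{W_i}$ separates them, i.e.\ by the first step exactly when $W_i$ traverses $e$ an odd number of times. Hence $B_C(P)$ separates the $2$-cells of $e$ if and only if an odd number of the $W_i$ traverse $e$ an odd number of times. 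Finally I would reduce the integral identity $C=\sum_i W_i$ modulo $2$ and read off the coefficient of the undirected edge $e$: the coefficient of $e$ in $C$ equals $[e\in C]$ since $C$ is a cycle, while the coefficient of $e$ in $W_i$ reduces modulo $2$ to the parity of the number of traversals of $e$ by $W_i$. Thus the number of indices $i$ for which $W_i$ traverses $e$ an odd number of times is, modulo $2$, equal to $[e\in C]$; combined with the previous paragraph this shows that $B_C(P)$ separates the $2$-cells of $e$ if and only if $e\in C$.

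The step I expect to be the main obstacle is the single-relator case, and within it the careful reconciliation of the undirected crossing count of \eqref{FeH} with the two directed traversal counts of \Lr{We}, together with the verification that the one-$2$-cell case $c_1=c_2$ is handled coherently by the $N(\RF)$-invariance of the face-bipartition. The recombination and the mod-$2$ reduction of the homological identity are then routine linear algebra over $\mathbb F_2$.
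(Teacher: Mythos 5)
Your proof is correct and follows essentially the same route as the paper's: reduce to the single-relator bipartitions $B_{W_i}$ via \eqref{FeH} and \Lr{We}, recombine over $\mathbb F_2$ using the symmetric-difference definition of $B_C(P)$, and compare with the mod-$2$ reduction of the chain identity $C=\sum_i W_i$. The extra care you take (reconciling directed and undirected traversal counts, discarding $b^2$-relators so that \Lr{We} applies, and the one-$2$-cell case) fills in details the paper leaves implicit; the only point to add is that discarding a $b^2$-relator from $P$ must also be checked not to alter the unordered bipartition $B_C(P)$ itself, which holds because such a relator yields the trivial face-bipartition and complementing one $I_{W_i}$ only swaps the two sides.
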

\begin{proof}
Let $I,O$ be the two elements of $B_C(P)$ as defined above. Then, letting $1_{F\in I}$ denote the indicator function of $F\in I$, we have  
$$1_{F\in I} = N_F:= | \{W_i \mid F\in I_{W_i}\} | \pmod 2,$$ 
and similarly 
$$1_{H\in I} = N_H:= | \{ W_i \mid H\in I_{W_i}\} | \pmod 2.$$
But 
$$N_F+N_H = | \{ W_i \mid W_i \text{ separates } F \text{ from } H\} | \pmod 2 $$ by the construction of $I,O$. We claim that $| \{ W_i \mid W_i \text{ separates } F \text{ from } H\} |$ is odd if and only if $e\in E(C)$. Indeed, $B_{W_i}$ separates $F$ from $H$ exactly when $W_i$ traverses $e$ an odd number of times by \eqref{FeH} and \Lr{We}, 
and $e$ is in $C$ exactly when there is an odd number of $W_i$ that traverse $e$ an odd number of times. 

Since that number is even if $e\not\in E(C)$ and odd otherwise, our last congruence yields $N_F+N_H = 1 \pmod 2$ if and only if $e\in E(C)$. Therefore, the previous congruences imply that $1_{F\in I}= 1_{H\in I}$ if  $e\not\in E(C)$ and $1_{F\in I}\neq 1_{H\in I}$ if  $e\in E(C)$, which is our claim.
\end{proof}

\Lr{faces} implies in particular that $B_C(P)$ is characterised by $C$ alone and is therefore independent of $P$, since $\Gc$ was defined without reference to $P$. Thus we can denote it by just $B_C$ from now on. In the following, we use again the definition of a crossing from \Sr{secDem}.

\begin{lem} \label{crossep}
Let $C'$ be a finite path  of \ttt\ such that $C:=\pi(C')$ is a cycle of~\G, and let $Q=e\kreis{Q}f$ be a crossing of $C'$ in \ttt. Then $B_C$ separates the 2-cells incident with $\pi(e)$ from the 2-cells incident with $\pi(f)$. Moreover, if $Q_2$ is a path of \ttt\ such that $\pi(Q_2)$ is a cycle of \G, then $Q_2$ crosses $C'$ an even number of times.
\end{lem}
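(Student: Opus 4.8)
The plan is to reduce both assertions to counting, modulo $2$, how often a path in the dual tree $\ttt^*$ crosses the preimage $\pi^{-1}(E(C))$. Since $B_C=\sydi_i B_{W_i}$ and each $B_{W_i}$ is $N(\RF)$-invariant by \Lr{Ninv}, the bipartition $B_C$ of $\cells$ lifts to an $N(\RF)$-invariant bipartition of the faces of $\ttt$, namely $\sydi_i\{I_{W_i},O_{W_i}\}$. Hence the $B_C$-side of a $2$-cell equals the side of any face of $\ttt$ representing it, and it suffices to locate the two faces $F_e,F_f$ of $\ttt$ incident with $e$ and $f$ in this face-bipartition. I would also note at the outset that when $\pi(e)\notin C$ the two faces of $e$ lie on the same side (by \eqref{FeH} combined over the $W_i$), so the side of the $2$-cells incident with $\pi(e)$ is well defined; in the situation of a crossing we may assume $\pi(e),\pi(f)\notin C$.

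Next I would set up the crossing-count identity. The side of a face relative to a fixed base face is read off as $\sum_i Cr(\ttt[W_{i,o}],P)\pmod 2$ along any dual path $P$, this parity being independent of $P$ by \eqref{Cr}. The key computation evaluates this sum: for an edge $g$ of $\ttt$, \Lr{We} applied to both orientations of $g$ says that the number of double-rays of $\ttt[W_{i,o}]$ through $g$ equals the number of traversals of $\pi(g)$ by $o_G W_i$; since $C=\sum_i W_i$ in the cycle space, summing over $i$ gives, modulo $2$, the multiplicity of $\pi(g)$ in the cycle $C$, which is $1$ exactly when $\pi(g)\in E(C)$. Summing over the dual edges of $P$ yields
\[
\sum_i Cr(\ttt[W_{i,o}],P)\equiv \bigl|\{g^*\in P : \pi(g)\in E(C)\}\bigr| \pmod 2 ,
\]
so $F_e,F_f$ are separated by $B_C$ if and only if some (hence every) dual path from $F_e$ to $F_f$ crosses $\pi^{-1}(E(C))$ an odd number of times.

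It then remains to exhibit such a dual path of odd crossing number, and this is where the geometry of the crossing $Q=e\kreis{Q}f$ enters. The interior $\kreis{Q}$ runs along the arc $C'$, while the end-edges $e,f$ leave $C'$ on opposite sides. Routing $P$ through a thin neighbourhood of $Q$, it stays on $e$'s side of $C'$ along $\kreis{Q}$ and must eventually switch to $f$'s side, which forces exactly one transverse crossing of $C'$ itself. The main obstacle I anticipate is to guarantee that $P$ meets the remaining components of $\pi^{-1}(E(C))$ --- the $N(\RF)$-translates of $C'$ --- an even number of times, so that the total crossing number stays odd. I expect to control this by keeping $P$ inside a sufficiently thin neighbourhood of $Q$, where the only edges of $\ttt$ it crosses are incident with vertices of $C'$, and then invoking \Lr{nosubwords} together with the non-crossing property of relators to rule out an odd surplus from a translate. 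Alternatively, one can avoid arcs altogether and compute $\sum_i Cr(\ttt[W_{i,o}],P)$ intrinsically as a sum over the plane-separating double-rays $T\in\bigcup_i\ttt[W_{i,o}]$ of the parity with which $Q$ crosses $T$; by \eqref{Cr} each term depends only on whether $T$ separates $F_e$ from $F_f$, and the ``opposite sides'' hypothesis makes the total odd.

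For the ``Moreover'' part I would deduce the statement from the first part by a telescoping/parity argument. Traversing $Q_2$ and recording, via \Lr{faces}, the $B_C$-side of the $2$-cell flanking $Q_2$, the first part (applied at each crossing of $C'$) shows that this recorded side flips at each such crossing. Since $\pi(Q_2)$ is a cycle, the two endpoints of $Q_2$ are $N(\RF)$-equivalent, so by the $N(\RF)$-invariance coming from \Lr{Ninv} the recorded side agrees at both ends; hence the number of flips, and with it the number of crossings of $C'$ by $Q_2$, must be even. The delicate bookkeeping here is the same as above, namely making sure the recorded side is constant between consecutive crossings of $C'$, i.e. that the translates of $C'$ do not introduce uncounted flips; this I would again settle by the locality of the neighbourhood of $Q_2$ and the structural constraints guaranteed by \Lr{nosubwords}.
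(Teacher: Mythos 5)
Your overall strategy coincides with the paper's: both reduce the first assertion to the parity with which a dual path from a face at $e$ to a face at $f$ meets $\pi^{-1}(E(C))$, and both obtain the second assertion by the same telescoping argument along $Q_2$ (the paper decomposes $\pi(Q_2)$ into crossings $C_i$ and connecting arcs $D_i$ avoiding $C$, which is exactly your ``the recorded side flips only at crossings'' bookkeeping). Your congruence $\sum_i Cr(\ttt[W_{i,o}],P)\equiv |\{g^*\in P:\pi(g)\in E(C)\}|\pmod 2$ is correct, and is essentially \Lr{faces} re-derived upstairs in $\ttt$ via \Lr{We} and \eqref{FeH}; the paper instead walks the dual path through the faces incident with $Q$ and applies \Lr{faces} together with \Lr{Ninv} one shared edge at a time.

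The gap is precisely in the step you flag as the main obstacle, and the tools you propose for it do not work. To rule out an odd surplus of crossings coming from the other components of $\pi^{-1}(E(C))$ you invoke \Lr{nosubwords} and the non-crossing property of relators; but $C$ is an arbitrary cycle of $G$, not a relator walk, so its lifts are not among the double rays $\ttt[W_o]$ that the nestedness hypotheses govern, and \Lr{nosubwords} is a statement about $\RF$ only. Your fallback computation (summing over separating double rays and asserting that the ``opposite sides'' hypothesis makes the total odd) simply restates the conclusion. What actually closes the gap is elementary and of a different nature: $\pi$ is injective on the star of every vertex of $\ttt$, and a vertex $v$ of $\kreis{Q}$ interior to $C'$ maps to a vertex of the cycle $C$ whose two incident $C$-edges are exactly the images of the two $C'$-edges at $v$; hence every other edge at $v$ has image outside $E(C)$. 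Consequently a dual path confined to the faces incident with $Q$ meets $\pi^{-1}(E(C))$ exactly once, namely at the single edge of $C'$ it must traverse to pass from the side of $e$ to the side of $f$ (this is also the fact the paper uses implicitly when it asserts that exactly one of its shared edges $e_i$ lies in $C'$ before invoking \Lr{faces}). The same observation, not the nestedness of relators, is what keeps your recorded side constant along the arcs between consecutive crossings in the ``Moreover'' part. With that substitution your argument goes through and matches the paper's.
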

\begin{proof}
Let $F$ be a face incident with the first edge $e$ of $Q$, and let $H$ be a face incident with the last edge $f$ of $Q$.
By the definition of a crossing, we can find a finite sequence $(F=)F_1, \ldots, F_k(=H)$ of faces of \ttt\ such that each $F_i$ shares an edge $e_i$ with $F_{i+1}$ and exactly one of the $e_i$ lies in $C'$: we can visit all faces incident with $Q$ until we reach~$H$. By \Lr{faces} and \Lr{Ninv}, $B_C$ separates $\pi(F_1)$  from $\pi(F_k)$. 
This proves our first assertion.
\medskip

For the second assertion, note that $\pi(Q_2)$ can be written as a concatenation of subarcs $C_1 D_1 C_2 D_2 \ldots C_k=C_1$ where each $C_i$ lifts to a crossing of $C'$ by $Q_2$ and each $D_i$ avoids $C$ and shares exactly one end-edge with each of $C_{i}$ and $C_{i+1}$. 
We proved above that the 2-cells incident with end-edges of each $C_i$ are separated by $B_C$. The same arguments imply that  the 2-cells incident with end-edges of each $D_i$ are {\em not} separated by $B_C$. Since $\pi(Q_2)$ is a cycle, this implies that $Q_2$ crosses $C'$ an even number of times. 
\end{proof}

We can now prove that all pairs of identified points of $\cls{D}$ are nested, completing the proof of \Tr{thmplanar} started at the beginning of \Sr{secplty}.
Suppose, to the contrary, there are two pairs $x,x'$ and $y,y'$ that are not nested. Let $X$ be the $x$--$x'$ path in $\cls{D}$, and let $Y$ be the $y$--$y'$ path. Then $X$ crosses $Y$ exactly once since $\cls{D}$ is a tree, contradicting the last statement of \Lr{crossep} because $\pi(X),\pi(Y)$ are cycles of \g by the definition of $\cls{D}$. Thus all such pairs are nested, and our proof is complete.

\comment{
We can now complete the proof that no edge joins the two sides of  $B_C$:
\begin{proof}[Proof of \Lr{sidesC}]
We begin by noting that \Lr{faces} extends to vertices:
\labtequ{vecfac}{For every $v\in G \sm C$, all 2-cells incident with $v$ are in the same side of $B_C(P)$.}
Indeed, this can be seen by applying \Lr{faces} to all edges incident with $v$; since $v\not\in C$, none of these edges lie in $C$ and so the lemma applies, showing that no two (consequtive) 2-cells of $v$ are separated by  $B_C(P)$.

\labtequ{parF}{... for every 2-cell $F$, every vertex and edge in  $\partial F \sm C$ is in the same side of $B_C(P)$ as $F$.}

Finally, suppose there is an edge $e=xy$ with $x\in X$ and $y\in Y$. Let $F$ be a 2-cell containing $e$, which always exists by the construction of $\Gc$. 
But this contradicts \eqref{parF}, as if $x,y$ lie in different sides of  $B_C(P)$, then one of them does not lie in the same side as its 2-cell $F$.
\end{proof}

\begin{cor} \label{corcycles}
Let $\PF$ be a \plpr\ and $G=Cay(\PF)$. Then every two cycles of $G$ cross each other an even number of times. 
\end{cor}
\begin{proof}

Suppose there are cycles $C,D$ such that $D$ crosses $C$ an odd number of times. Pick a vertex $x\in V(D)\sm V(C)$, and assume without loss of generality that $x\in L_C$. As we walk once around $D$ starting from $x$, we switch between $I_C$ and $O_C$ an odd number of times by \ref{}. This contradicts the fact that $\{R_C,L_C\}$ is a bipartition of $G - C$.
\end{proof}

\begin{thm}
For every \plpr\ $\PF=\left<\SF,\PF\right>$, the \Cg\ $G=Cay(\PF)$ is planar. 
\end{thm}
\begin{proof}
Let $D$ be a fundamental domain of $\ttt$ with respect to the action of $N(\RF)$, which exists by \Lr. Recall that we may assume that $D$ is a union of stars \eqref{stars}. Thus the closure $\cls{D}$ of $D$ in $\ttt$ is the union of $D$ with all midpoints of edges that have exactly one half-edge in $D$. Moreover, $G$ can be obtained from $\cls{D}$ by identifying pairs of midpoints: each midpoint $m$ in $\cls{D}\sm D$ is identified with the unique midpoint $m'$ in $D$ such that $m\in N(\RF)m'$.

To show that $G$ is planar, it will suffice to show that these pairs of identified points are {\em nested} in the embedding of $\cls{D}$ inherited from the embedding of $\ttt$. Here, we say that two pairs of midpoints  $x,x'$ and $y,y'$ in $\cls{D}\sm D$ are nested, if the $x$--$x'$ path in $\cls{D}$ crosses the $y$--$y'$ path. 

 It is now an exercise in topology to prove that \g is planar: note that we can cut a closed domain $D'$ of $\R^2$ homeomorphic to a closed disc such that $D'\cap \ttt=\cls{D}$. Let $D''$ be a homeomorphic copy of $D'$, and glue $D'$ to $D''$ by identifying all pairs of corresponding points of their boundaries to obtain a homeomorph $S$ of the sphere. For every pair $x,x'$ of identified points of $\cls{D}$, let $X$ be the $x$--$x'$ path in $\cls{D}$ and let $X''$ be its copy in $D''$. Nestedness implies by definition that these arcs $X''$ are pairwise disjoint. By continuously deforming $S$ we can contract each such $X''$ into a point, obtaining a new sphere $S_2$ that has \g embedded on it.
\end{proof}

}

\bibliographystyle{plain}
\bibliography{collective}

\end{document}